\newcommand{\Xbar}{{\mathchoice
     {\smash@bar\textfont\displaystyle{0.55}{2.5}\mathscr{X}}
     {\smash@bar\textfont\textstyle{0.55}{2.5}\mathscr{X}}
     {\smash@bar\scriptfont\scriptstyle{0.55}{2.5}\mathscr{X}}
     {\smash@bar\scriptscriptfont\scriptscriptstyle{0.55}{2.5}\mathscr{X}}
          }}
\newcommand{\smash@bar}[4]{%
     \smash{\rlap{\raisebox{-#3\fontdimen5#10}{$\m@th#2\mkern#4mu\mathchar'26$}}}%
          }
\newcommand{\X}[1]{\mathscr{\Xbar}_{#1}}
\newcommand{\T}{\mathsf{T}}
\newcommand{\V}{\mathbb{V}}
\newcommand{\HH}{\mathbb{H}}
\newcommand{\dd}{\mathrm{d}}
\newcommand{\OmegaH}{\Omega^{\mathrm{H}}}
\newcommand{\OmegaV}{\Omega^{\mathrm{V}}}
\newcommand{\QH}{\mathrm{Q}_{\mathrm{H}}}
\newcommand{\QV}{\mathrm{Q}_{\mathrm{V}}}
\newcommand{\Curv}{\mathrm{Curv}}
\newcommand{\hor}{\mathrm{hor}}
\newcommand{\dlie}[1]{\mathscr{L}_{#1}}
\newcommand{\cSch}[1]{[\hspace{-0.065cm}[ #1 ]\hspace{-0.065cm}]}
\newcommand{\Cinf}[1]{\mathbf{\mathit{C}}^{\infty}_{#1}}
\DeclareMathOperator{\rank}{rank}
\numberwithin{equation}{section}
\begin{document}

\allowdisplaybreaks


\thispagestyle{empty}

     \begin{center}
\ArticleName{Compatible Poisson Structures on Fibered $\mathbf{5}$-Manifolds}
     \end{center}

\ShortArticleName{Compatible Poisson Structures on Fibered $5$-Manifolds}

\AuthorNameForHeading{R. Flores-Espinoza, J. C. Ru\'iz-Pantale\'on and Yu. Vorobiev}

     \begin{center}
\Author{R. FLORES-ESPINOZA\,$^{1}$, J. C. RU\'IZ-PANTALE\'ON\,$^{2}$ and YU. VOROBIEV\,$^{3}$}

\Address{${\scriptstyle 1}$\ Department of Mathematics, University of Sonora, M\'exico}
\EmailD{\href{mailto:rflorese@mat.uson.mx}{rflorese@mat.uson.mx}}

\Address{${\scriptstyle 2}$\ Department of Mathematics, University of Sonora, M\'exico}
\EmailD{\href{mailto:jcpanta@mat.uson.mx}{jcpanta@mat.uson.mx}}

\Address{${\scriptstyle 3}$\ Department of Mathematics, University of Sonora, M\'exico}
\EmailD{\href{mailto:yurimv@guaymas.uson.mx}{yurimv@guaymas.uson.mx}}
     \end{center}


\Abstract{We study a class of Poisson tensors on a fibered manifold which are compatible with the fiber bundle structure by the so-called almost coupling condition. In the case of a $5$-dimensional orientable fibered manifolds with $2$-dimensional bases, we describe a global behavior of almost coupling Poisson tensors and their singularities by using a bigraded factorization of the Jacobi identity. In particular, we present some unimodularity criteria and describe a class of gauge type transformations preserving the almost coupling property.}

\Keywords{Poisson structures, fiber bundles, almost coupling tensors, Poisson connections}

\Classification{53D17; 53C12; 70G45.}

\vspace{-2mm}

\section{Introduction}

Recall that a Poisson manifold consists of a smooth manifold $M$ equipped with a Poisson structure, that is, a Lie bracket $\{,\}$ on the space of smooth functions $\Cinf{M}$ which is compatible with the pointwise product by the Leibniz rule. A Poisson structure can be given by a bivector field \,$\Pi \in \Gamma\wedge^{2}\T{M}$,\, called a \textit{Poisson tensor}, by the formula \,$\{f,g\}=\Pi(\dd{f},\dd{g})$.\, The Jacobi identity for the bracket $\{,\}$ is equivalent to the nonlinear equation \,$\cSch{\Pi,\Pi}=0$\, for the Schouten bracket. A \textit{singular point} $p$ of the Poisson structure is characterized by the condition that $\rank{\Pi}$ is not constant around $p$. The Hamiltonian vector fields \,$X_{f}=\mathbf{i}_{\dd{f}}\Pi$,\, $f \in \Cinf{M}$\, span singular integrable distribution which induces a partition of $M$ into immersed submanifolds called \textit{symplectic leaves}.

We are interested in Poisson manifolds equipped with additional structures, namely, fiber bundle structures. Fibered manifolds appear as natural phase spaces for various physical models as well in some problems in Poisson geometry (see, for example, \cite{MaMoRa-90,MMR-90,BraFe-08,CrMa-10,MYu,JVYu}).

In this article, we study a class of Poisson tensors $\Pi$ on a fibered manifold $M$ over a base $B$ which are compatible with the fiber bundle structure by the condition: the mixed term of $\Pi$ in the bigraded decomposition relative to an Ehresmann connection (a horizontal subbundle) vanishes and $\Pi$ is the sum of the horizontal and vertical bivector fields on $M$, \,$\Pi=\Pi_{H}+\Pi_{V}$.\, Such bivector fields on $M$ are called the \textit{almost coupling Poisson structures} \cite{Va-04} and generalize the class of \textit{coupling Poisson structures} naturally arising in the study of Poisson geometry around symplectic leaves \cite{YuV-2001}. A coupling Poisson tensor $\Pi$ on a fibered manifold $M$ is characterized by a horizontal nondegeneracy condition and has the following feature: the Jacobi identity for $\Pi$ admits a bigraded factorization which leads to the four equations called the integrability conditions having a natural geometric interpretation. Our goal is to investigate the global behavior of compatible Poisson tensors on fibered manifolds in the almost coupling case with further applications to the linearization problem and normal forms \cite{YuV-2001,CrMa-10,Marcut} and the method of averaging \cite{MYu,JVYu}. In particular, one of our motivations comes from the question on the existence of almost coupling neighborhoods for Poisson submanifolds. The answer to this question is positive \cite{YuV-2001} in the case of symplectic leaves which present a ``simplest'' type of Poisson submanifolds.

Our approach is based on the following key observation. The rank of the horizontal component $\Pi_{H}$ of a bivector field $\Pi$ in a bigraded decomposition is independent of the choice of an Ehresmann connection. This gives rise to an intrinsic decomposition of the
fibered manifold \,$M=\cup_{i=0}^{k}M_{i}$\, (disjoint union), where \,$k=\dim{B}$\, and $M_{i}$ is the subset of all points at which the rank of the horizontal tensor $\Pi_{H}$ equals \,$i\leq k$.\, In particular, \,$M^{\Pi}:=M_{k}$\, is an open subset in $M$, consisting of the points of maximal horizontal rank of $\Pi$. A given almost coupling Poisson tensor $\Pi$ has a ``good'' behavior on
$M^{\Pi}$ in the sense that the restriction $\Pi|_{M^{\Pi}}$ is a coupling tensor and inherits a unique Poisson connection. The question is to describe how the tensor field $\Pi$ in the complement \,$M \setminus M^{\Pi}$\, is glued with its coupling part by passing through the boundary $\partial M^{\Pi}$ which consists of singular points of $\Pi$. We address this question to the case of almost coupling Poisson tensors on $5$-dimensional fibered manifolds with $2$-dimensional bases. Under some orientability assumptions, we give a
complete characterization of such ``toy'' Poisson models by using the bigraded tensorial calculus and the Ehresmann connection technique. In particular, we present some unimodularity criteria and describe a class of symmetries of the integrability conditions by using the gauge type transformations for Poisson structures \cite{SeWe-01,BuRa-03}. Moreover, we illustrate these results by considering the trivial fiber bundles and constructing deformed Poisson structures on the product $(2+3)$-manifolds.

Note that various results on low dimensional Poisson manifolds in dimensions $2$, $3$ and $4$ were obtained, for example, in \cite{LiuXU-92,GMP,GN-93,CIMP-94,Ra-2002,Nar-2015,PabSua-2018}. The results, presented in this paper, can be used for the classification of Poisson structures around $2$-dimensional Poisson submanifolds.

The paper is organized as follows. In Section $2$, we briefly recall the basic facts about Ehresmann connections on fiber bundles. In Section $3$, some general properties of almost coupling Poisson structures are formulated. In Section $4$, we present our main results and give a complete description of almost coupling Poisson tensors on $5$-dimensional orientable fibered manifolds with $2$-dimensional bases. Section $5$ contains a global criterion of unimodularity which extends the results of \cite{AEYu-17} to the almost coupling case. In Section $6$, we describe a method of construction of almost coupling Poisson tensors by using the gauge transformations. In Section $7$, we present some coordinate formulas and illustrate the general results in the case of trivial bundles.

\section{Preliminaries}

Here we recall some basic facts on Ehresmann connections on fiber bundles which will be used in our bigraded calculus on fibered manifolds (for more details, see also \cite{MMR-90,YuV-2001,Va-04,AEYu-17}).

Let \,$\pi:M \rightarrow B$\, be a fiber bundle (a surjective submersion). Denote by \,$\V:=\ker\dd{\pi} \subset \T{M}$\, the \emph{vertical subbundle} and by \,$\V^{\circ}:=\mathrm{Ann}\,\V \subset \T^{\ast}M$\, its annihilator. By an Ehresmann connection on the fiber bundle, we mean a \emph{horizontal subbundle} \,$\HH \subset \T{M}$,\, that is, a complementary subbundle to the vertical one,
     \begin{equation}
          \T{M} \,=\, \HH \oplus \V, \label{D1}
     \end{equation}
This induces the dual decomposition
     \begin{equation}
          \T^{\ast}M \,=\, \V^{\circ} \oplus \HH^{\circ}. \label{D2}
     \end{equation}

Alternatively, one can define an Ehresmann connection as a vector-valued $1$-form \,$\gamma \in \Omega^{1}(M;\V)$\, (i.e. a vector bundle morphism \,$\gamma:\T{M} \rightarrow \V$)\, with property \,$\gamma|_{\V}=\mathrm{id}_{\V}$.\, Then, given a horizontal subbundle $\HH$, the connection form is define by the natural projection \,$\gamma=\mathrm{pr}_{2}:\T{M} \longrightarrow \V$\, along $\HH$. On the contrary, given a $\gamma$ we put \,$\HH:=\ker\gamma$.

Given an Ehresmann connection $\gamma$, a horizontal lift of a vector field \,$u \in \X{B}$\, is the unique horizontal vector field \,$\hor^{\gamma}u \in \Gamma\HH$\, which is $\pi$-related to $u$. Moreover, the horizontal lift of every $k$-vector field $\psi$ on $B$ is defined as a horizontal $k$-vector field \,$\hor^{\gamma}\psi \in \Gamma\wedge^{k}\HH$\, satisfying the condition $\hor^{\gamma}\psi\,(\pi^{\ast}\alpha_{1},\ldots,\pi^{\ast}\alpha_{k}) = \pi^{\ast}(\psi(\alpha_{1},\ldots,\alpha_{k}))$,\, for \,$\alpha_{1},\ldots,\alpha_{k} \in \Gamma\,\T^{\ast}{B}$.\, The curvature of the connection $\gamma$ is a vector valued $2$-form \,$\Curv^{\gamma}\in\Omega^{2}(B;\V)$\, on the base $B$ given by
     \begin{equation*}
          \Curv^{\gamma}(u_{1},u_{2}) \,:=\, [\hor^{\gamma}u_{1},\hor^{\gamma}u_{2}]-\hor^{\gamma}[u_{1},u_{2}] \,\in\, \Gamma\V,
     \end{equation*}
for \,$u_{1},u_{2} \in \X{B}$.\, As is known that the horizontal subbundle $\HH$ is integrable if and only if the connection $\gamma$ is flat, \,$\Curv^{\gamma}=0$.

Fix a local coordinate system $(x^{i},y^{a})$ on the total space $M$, where \,$x=(x^{i})$\, are coordinates on the base and \,$y=(y^{a})$\, are coordinates along the fiber of $\pi$. Then, \,$\gamma=(\dd{y^{a}}+\gamma_{i}^{a}\,\dd{x^{i}}) \otimes \frac{\partial}{\partial y^{a}}$\, and \,$\hor_{i}^{\gamma}:=\hor^{\gamma}\left(\frac{\partial}{\partial x^{i}}\right)=\frac
{\partial}{\partial x^{i}}-\gamma_{i}^{a}\,\frac{\partial}{\partial y^{a}}$,\, the summation on repeated indices will be understood. So, we have the following local descriptions of the horizontal and vertical distributions
     \begin{equation*}
          \HH \,=\, \mathrm{span}\Big\{\hor_{1}^{\gamma},\ldots,\hor_{n}^{\gamma}\Big\}, \qquad \V \,=\, \mathrm{span}\left\{ \tfrac{\partial}{\partial y^{1}},\ldots,\tfrac{\partial}{\partial y^{r}} \right\}.
     \end{equation*}
Here \,$n=\dim{B}$\, and \,$r=\rank{\pi}$.\, Taking the dual basis $\{\dd{x^{i}},\eta^{a}\}$ of $\{\hor_{i}^{\gamma},\frac{\partial}{\partial y^{a}}\}$, where \,$\eta^{a}:=\gamma_{i}^{a}\,\dd{x^{i}}+\dd{y^{a}}$,\, we also
get
     \begin{equation*}
          \V^{\circ} \,=\, \mathrm{span}\big\{\dd{x^{1}},\ldots,\dd{x^{n}}\big\}, \qquad \HH^{\circ} \,=\, \mathrm{span}\big\{\eta^{1},\ldots,\eta^{r}\big\}.
     \end{equation*}
Decompositions (\ref{D1}) and (\ref{D2}) give the bigrading for multivector fields and differential forms on $M$. We say that a tensor field on $M$ is of bidegree $(p,q)$ if this field is locally generated by the elements of the form \,$\hor_{i_{1}}^{\gamma} \wedge\cdots\wedge \hor_{i_{p}}^{\gamma} \wedge \tfrac{\partial}{\partial y^{a_{1}}} \wedge\cdots\wedge \tfrac{\partial}{\partial y^{a_{q}}}$.\, Therefore, the indices $p$ and $q$ denote the degrees in the direction of $\HH$ and $\V$, respectively. Each tensor field $A$ on $M$ has a $\gamma$-dependent bigraded decomposition whose component of bidegree $(p,q)$ will be denoted by $A_{p,q}$. The same bigrading argument are applied to differential forms on $E$, in particular, a form of bidegree $(p,q)$ is a linear combination of basic elements \,$\dd{x^{i_{1}}} \wedge\cdots\wedge \dd{x^{i_{p}}} \wedge \eta^{a_{1}} \wedge\cdots\wedge \eta^{a_{q}}$.\, Moreover, we say that a linear operator, acting on the space of tensor fields or differential forms, has a bidegree $(t,s)$ if it sends $(p,q)$-elements to elements of bidegree $(p+t,q+s)$. For example, the exterior differential $\dd$ for forms on $M$ has the following bigraded decomposition \cite{Va-04}: \,$\dd=\dd_{1,0}+\dd_{2,-1}+\dd_{0,1}$.\, Here \,$\dd_{p,q}=\dd_{p,q}^{\gamma}$\, is a $\gamma$-dependent operator of bidegree $(p,q)$. The coboundary condition for $\dd$ imply the relations: \,$\dd_{1,0}^{2} + \dd_{2,-1} \circ \dd_{0,1}  + \dd_{0,1} \circ \dd_{2,-1} = 0$, \,$\dd_{1,0} \circ \dd_{0,1} + \dd_{0,1} \circ \dd_{1,0} =0$\, and \,$\dd_{0,1}^{2} =0$.

It is also useful to note that two Ehresmann connections $\gamma$ and $\widetilde{\gamma}$ on $M$ are related by
     \begin{equation}
          \widetilde{\gamma} \,=\, \gamma-\Xi,\label{CT1}%
     \end{equation}
where a vector valued $1$-form \,$\Xi \in \Omega^{1}(M;\T{M})$\, satisfies the conditions
     \begin{equation}
          \mathrm{Im}\,\Xi \,\subseteq\, \V \,\subseteq\, \ker\Xi. \label{CT2}
     \end{equation}
It follows that the horizontal subbundle $\mathbb{\widetilde{H}}$ associated with $\widetilde{\gamma}$ is given by \,$\widetilde{\HH}=(\mathrm{id}+\Xi)(\HH)$.

\section{Almost Coupling Poisson Tensors}

Let \,$\pi:M \rightarrow B$\, be a fiber bundle. Given a bivector field \,$\Pi \in \Gamma\wedge^{2}\T{M}$,\, let us pick an arbitrary Ehresmann connection $\gamma$ on $M$ and consider the corresponding bigraded decomposition
     \begin{equation}
          \Pi \,=\, \Pi_{2,0} + \Pi_{1,1} + \Pi_{0,2} \label{DF}
     \end{equation}
Here \,$\Pi_{2,0} \in \Gamma\wedge^{2}\HH$\, and \,$\Pi_{0,2} \in \Gamma\wedge^{2}\V$\, are the horizontal and vertical components, respectively. It is clear that
     \begin{equation*}
          \rank{\Pi_{2,0}} \,\leq\, \rank{\HH} \,=\, \dim{B}.
     \end{equation*}
Under varying the connection $\gamma$, the components in decomposition (\ref{DF}) are changing by some rules according to (\ref{CT1}). But it is easy to see that the rank of the horizontal part regarded as a function \,$\rank{\Pi_{2,0}}:M \rightarrow \mathbb{Z}$\, does not depend on the choice of $\gamma$. As a consequence, the open subset
     \begin{equation}
          M^{\Pi} \,:=\, \{\, p \in M \,\big|\, \rank_{p}\Pi_{2,0} = \dim{B} \,\} \label{CO}
     \end{equation}
in $M$ consisting of all point \,$p \in M$\, at which the rank of the horizontal bivector field is maximal, is also $\gamma$-independent and hence represents an intrinsic characteristic of $\Pi$. Here we suppose that the dimension of $B$ is even.

We are interested in the following class of compatible bivector fields on the fibered manifold $M$ \cite{YuV-2001,Va-04}.

\begin{definition}
A bivector field \,$\Pi \in \Gamma\wedge^{2}\T{M}$\, is said to be an \emph{almost coupling tensor} if there exists an Ehresmann connection $\gamma$ on $M$ such that the mixed term in the $\gamma$-dependent decomposition (\ref{DF})
vanishes,
     \begin{equation}
          \Pi_{1,1} \,=\, 0, \label{AC}
     \end{equation}
or, explicitly, \,$\Pi(\alpha,\beta)=0$,\, for all \,$\alpha \in \V^{\circ}$\, and \,$\beta \in \HH^{\circ}$.
\end{definition}

Equivalently, condition (\ref{AC}) can be reformulated as follows, there exists a horizontal subbundle \,$\HH \subset \T{M}$\, such that
     \begin{equation}
          \Pi^{\sharp}\big(\V^{\circ}\big) \,\subseteq\, \HH. \label{AC1}
     \end{equation}
Here \,$\Pi^{\sharp}:\T^{\ast}M \rightarrow \T{M}$\, is a vector bundle morphism given by \,$\alpha \mapsto \mathbf{i}_{\alpha}\Pi$.\, In particular, $\Pi$ is called a \emph{coupling bivector field} if
     \begin{equation}
          \HH \,=\, \Pi^{\sharp}\big(\V^{\circ}\big) \label{AC2}
     \end{equation}
is a horizontal subbundle. Therefore, in the coupling case, there exists a \emph{unique} Ehresmann connection $\gamma$ on $M$ defined by (\ref{AC2}) which provides the property (\ref{AC1}) for the bivector field $\Pi$.

Recall that \cite{YuV-2001,Va-04} a coupling tensor $\Pi$ on $M$ is uniquely determined by the so-called \emph{geometric data} $(\gamma,\sigma,\Pi_{0,2})$, consisting of the Ehresmann connection $\gamma$, a horizontal $2$-form \,$\sigma \in \Gamma\wedge^{2}\V^{\circ}$,\, called the \emph{coupling form} and the vertical component $\Pi_{0,2}$ of $\Pi$. By the horizontal nondegeneracy of $\sigma$, the horizontal component of $\Pi$ is recovered by the formula \,$\mathbf{i}_{\mathbf{i}_{\alpha}\Pi_{2,0}}\sigma = -\alpha$,\, for all \,$\alpha \in \V^{\circ}$.

Note that the (almost) coupling condition is natural with respect to the restriction of \ bivector fields to open subsets in $M$.

\begin{lemma}
For a bivector field \,$\Pi \in \Gamma\wedge^{2}\T{M}$,\, its restriction to the open subset \,$M^{\Pi} \neq \O$\, is a coupling tensor.
\end{lemma}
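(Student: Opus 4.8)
The plan is to verify, at every point of $M^{\Pi}$, the defining property (\ref{AC2}) of a coupling tensor: that the distribution $\HH:=\Pi^{\sharp}(\V^{\circ})$ is horizontal, i.e.\ complementary to the vertical subbundle $\V$. Since $\V=\ker\dd\pi$ and its annihilator $\V^{\circ}=\mathrm{Ann}\,\V$ are intrinsic (they do not depend on any connection), the bundle morphism $\Pi^{\sharp}|_{\V^{\circ}}\colon\V^{\circ}\to\T{M}$ is intrinsic as well, and it suffices to show that over $M^{\Pi}$ it is fibrewise injective with image transverse to $\V$. No hypothesis beyond $M^{\Pi}\neq\O$ should be needed.

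The key step is to recast the rank condition defining $M^{\Pi}$ as a nondegeneracy statement about an intrinsic skew form on $\V^{\circ}$. Fixing an arbitrary Ehresmann connection $\gamma$ and using the decomposition (\ref{DF}), I would first check that $\Pi_{1,1}(\alpha,\beta)=\Pi_{0,2}(\alpha,\beta)=0$ for all $\alpha,\beta\in\V^{\circ}$: the vertical part $\Pi_{0,2}\in\Gamma\wedge^{2}\V$ is annihilated by the horizontal covectors spanning $\V^{\circ}$, while each basic $(1,1)$-vector $\hor_{i}^{\gamma}\wedge\partial_{y^{a}}$ pairs to zero against two elements of $\V^{\circ}$ because one of its factors is vertical. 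Hence $\Pi(\alpha,\beta)=\Pi_{2,0}(\alpha,\beta)$ on $\V^{\circ}\times\V^{\circ}$. Under the identification $\V^{\circ}\cong\HH^{\ast}$ furnished by the splitting (\ref{D2}), this skew form is exactly $\Pi_{2,0}$ regarded as a bivector on $\HH$, so its rank equals $\rank\Pi_{2,0}$. This both explains the asserted $\gamma$-independence and shows that on $M^{\Pi}$ the form $(\alpha,\beta)\mapsto\Pi(\alpha,\beta)$ is \emph{nondegenerate}, its rank attaining the maximal value $\dim{B}=\rank\V^{\circ}$.

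The remaining steps are pointwise linear algebra at each $p\in M^{\Pi}$, using the pairing $\beta\big(\Pi^{\sharp}(\alpha)\big)=\Pi(\alpha,\beta)$. For injectivity: if $\Pi^{\sharp}(\alpha)=0$ with $\alpha\in\V^{\circ}_{p}$, then $\Pi(\alpha,\beta)=0$ for every $\beta\in\V^{\circ}_{p}$, so nondegeneracy forces $\alpha=0$; thus $\rank\Pi^{\sharp}(\V^{\circ})=\dim{B}$. For transversality I would invoke the double-annihilator identity $\V=\mathrm{Ann}\,\V^{\circ}$: if $\Pi^{\sharp}(\alpha)\in\V_{p}$ then $\beta\big(\Pi^{\sharp}(\alpha)\big)=\Pi(\alpha,\beta)=0$ for all $\beta\in\V^{\circ}_{p}$, whence $\alpha=0$ and $\Pi^{\sharp}(\alpha)=0$; therefore $\Pi^{\sharp}(\V^{\circ})\cap\V=\{0\}$. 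Since $\dim\Pi^{\sharp}(\V^{\circ}_{p})+\dim\V_{p}=\dim{B}+\rank\pi=\dim{M}$, the trivial intersection gives the splitting $\T_{p}M=\Pi^{\sharp}(\V^{\circ}_{p})\oplus\V_{p}$; and as $\Pi^{\sharp}|_{\V^{\circ}}$ has constant rank on the open set $M^{\Pi}$, its image is a genuine smooth subbundle there. This is precisely (\ref{AC2}), so $\Pi|_{M^{\Pi}}$ is coupling.

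I expect the only genuine subtlety to lie in the middle paragraph, namely in identifying the maximal-rank condition that defines $M^{\Pi}$ with nondegeneracy of the intrinsic skew form $\Pi|_{\V^{\circ}\times\V^{\circ}}$ --- equivalently, in verifying that neither the mixed term $\Pi_{1,1}$ nor the vertical term $\Pi_{0,2}$ contributes on $\V^{\circ}\times\V^{\circ}$. Once that identification is in place the almost coupling property is not even needed, and the injectivity and transversality arguments reduce to the nondegeneracy of a skew bilinear form.
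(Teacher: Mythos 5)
Your proof is correct, but it takes a genuinely different route from the paper's. The paper establishes the same two pointwise conditions you target, namely \,$\Pi^{\sharp}(\V_{p}^{\circ}) \cap \V_{p}=\{0\}$\, and \,$\dim\Pi^{\sharp}(\V_{p}^{\circ})=\dim B$\, (its conditions (\ref{SV1})), but does so by constructively repairing the connection: starting from the decomposition (\ref{DF}) with respect to an arbitrary $\gamma$, it defines a vector valued $1$-form $\Xi$ on $M^{\Pi}$ by \,$\Pi_{1,1}^{\sharp}|_{\V^{\circ}}=(\Xi\circ\Pi_{2,0}^{\sharp})|_{\V^{\circ}}$\, --- well-posed precisely because $\Pi_{2,0}^{\sharp}|_{\V^{\circ}}$ is invertible on $M^{\Pi}$ --- and observes that $\Pi$ is almost coupling via \,$\widetilde{\gamma}=\gamma-\Xi$;\, maximality of the horizontal rank then forces (\ref{SV1}). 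You bypass any connection correction: you identify the rank condition in (\ref{CO}) with nondegeneracy of the intrinsic skew form \,$(\alpha,\beta)\mapsto\Pi(\alpha,\beta)$\, on $\V^{\circ}$ (your check that neither $\Pi_{1,1}$ nor $\Pi_{0,2}$ contributes on \,$\V^{\circ}\times\V^{\circ}$\, is exactly right, and is the same observation underlying the paper's prior remark that $\rank\Pi_{2,0}$ is $\gamma$-independent), after which both conditions in (\ref{SV1}) are pointwise linear algebra. What each approach buys: yours is more elementary and intrinsic, re-proves the $\gamma$-independence of $\rank\Pi_{2,0}$ along the way, and makes transparent that no almost-coupling hypothesis is needed, consistent with the lemma being stated for an arbitrary bivector field; the paper's construction, on the other hand, exhibits the coupling horizontal subbundle explicitly as the graph \,$\widetilde{\HH}=(\mathrm{id}+\Xi)(\HH)$\, over the original one, i.e.\ it produces the unique coupling connection of $\Pi|_{M^{\Pi}}$ in terms of the initial data, and this $\Xi$-correction mechanism (\ref{CT1})--(\ref{CT2}) is reused later, notably in the gauge transformations of Section 6. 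One cosmetic remark: for your transversality step you only need the defining property \,$\V^{\circ}=\mathrm{Ann}\,\V$\, (covectors in $\V^{\circ}$ annihilate vertical vectors), not the double-annihilator identity.
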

\begin{proof}
Let us show that for each \,$p \in M^{\Pi}$,\, the bivector field $\Pi$ satisfies the coupling property which splits into the following conditions:
     \begin{equation}
          \Pi^{\sharp}\big(\V_{p}^{\circ}\big) \cap \V_{p} \,=\, \{0\} \qquad \text{and} \qquad \dim\Pi^{\sharp}\big(\V_{p}^{\circ}\big) \,=\, \dim{B} \label{SV1}.
     \end{equation}
Starting with decomposition (\ref{DF}) with respect to a fixed connection $\gamma$, we define a ``new'' Ehresmann connection $\widetilde{\gamma}$ on $M^{\Pi}$ by formula (\ref{CT1}), where a vector valued $1$-form $\Xi$ is given by the relation \,$\Pi_{1,1}^{\sharp}|_{\V_{p}^{\circ}} = \big(\Xi \circ \Pi_{2,0}^{\sharp} \big) |_{\V_{p}^{\circ}}$.\, By the definition of $M^{\Pi}$, the restriction $\Pi_{2,0}^{\sharp}|_{\V_{p}^{\circ}}$ is invertible for all \,$p \in M^{\Pi}$.\, Then, the fact that $\Pi|_{M^{\Pi}}$ is an almost coupling tensor via $\widetilde{\gamma}$ together with (\ref{CO}) imply the relations (\ref{SV1}).
\end{proof}

\begin{definition}
The open subset $M^{\Pi}$ in the fibered manifold $M$ will be called a \emph{coupling domain} of a bivector field \,$\Pi \in \Gamma\wedge^{2}\T{M}$.
\end{definition}

Now, given an almost coupling tensor $\Pi$ via an Ehresmann connection $\gamma$ on the fibered manifold $M$, we have the decomposition
     \begin{equation*}
          M \,=\, M^{\Pi} \,\cup\, \partial M^{\Pi} \,\cup\, \mathrm{Int}\big( M \setminus M^{\Pi} \big). \label{Dec}
     \end{equation*}

We observe that the Ehresmann connection $\gamma$ is uniquely defined by $\Pi$ in the closure $\overline{M^{\Pi}}$ of the coupling domain which coincides with the whole $M$ in the case when $M^{\Pi}$ is dense in $M$. But, in general, $\gamma$ is not fixed in the interior of the complement $M \setminus M^{\Pi}$. Note also that each point $p$ of the boundary $\partial M^{\Pi}$ is a \emph{singular point} of $\Pi$ in the sense that $\rank{\Pi}$ is not locally constant around $p$. Clearly, $\Pi$ is a coupling tensor on $M$ if and only if \,$M^{\Pi}=M$.

The next point is to study the set of\emph{ almost coupling Poisson tensors}, that is, the almost coupling bivector fields \,$\Pi=\Pi_{2,0}+\Pi_{0,2}$\, satisfying the Jacobi identity \,$\cSch{\Pi,\Pi}=0$,\, where $\cSch{,}$ denotes the Schouten bracket for multivector fields on $M$ \cite{Va-94}. Geometrically, the Jacobi identity means that the characteristic distribution
     \begin{equation}
          C^{\Pi} \,:=\, \Pi^{\sharp}\big( \T^{\ast}M \big) \,=\, \Pi_{2,0}^{\sharp}\big(\V^{\circ}\big) \oplus \Pi_{0,2}^{\sharp}\big(\HH^{\circ}\big), \label{CD}
     \end{equation}
is integrable in the sense of Sussman-Stefan and gives rise to the symplectic foliation $(\mathcal{S},\varpi)$.

By using the bigrading arguments and the properties of the Schouten bracket, one can show that the Jacobi identity for $\Pi$ splits into the following equations for the horizontal \,$\Pi_{2,0}\in\Gamma\wedge^{2}\HH$\, and vertical \,$\Pi_{0,2}\in\Gamma\wedge^{2}\V$\, components:
     \begin{align}
          \cSch{\Pi_{2,0},\Pi_{2,0}}_{3,0} \,&=\, 0, \label{J1} \\[0.15cm]
          \cSch{\Pi_{2,0},\Pi_{2,0}}_{2,1} \,+\, 2\cSch{\Pi_{2,0},\Pi_{0,2}}_{2,1} \,&=\, 0, \label{J2} \\[0.15cm]
          \cSch{\Pi_{2,0},\Pi_{0,2}}_{1,2} \,&=\, 0, \label{J3} \\[0.15cm]
          \cSch{\Pi_{0,2},\Pi_{0,2}} \hspace{0.41cm} \,&=\, 0. \label{J4}
     \end{align}
In particular, the last equality is just the Jacobi identity for the vertical component which says that $\Pi_{0,2}$ is a Poisson bivector field. It follows from (\ref{CD}) that $C^{\Pi}$ is the sum of the characteristic distribution of the vertical Poisson tensor $\Pi_{0,2}$ and the horizontal factor \,$\Pi_{2,0}^{^{\sharp}}(\V^{\circ})\subseteq\HH$\, which is not necessarily integrable, in general. This happens in the flat case \cite{YuV-2001}.

\begin{proposition}\label{PropFlatPair}
In the coupling domain $M^{\Pi}$ of an almost coupling Poisson tensor $\Pi$, the following conditions are equivalent:
     \begin{itemize}
       \item[$(i)$] the curvature of $\gamma$ is zero, \,$\Curv^{\gamma}=0$;
       \item[$(ii)$] the horizontal bivector field $\Pi_{2,0}$ is a Poisson tensor;
       \item[$(iii)$] the horizontal $\Pi_{2,0}$ and vertical $\Pi_{0,2}$ components of $\Pi$ form a Poisson pair.
     \end{itemize}
\end{proposition}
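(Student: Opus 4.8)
The plan is to exploit the bigraded factorization (\ref{J1})--(\ref{J4}) of the Jacobi identity together with the horizontal nondegeneracy of $\Pi_{2,0}$ on the coupling domain. By the Lemma, the restriction $\Pi|_{M^{\Pi}}$ is a coupling tensor, so throughout $M^{\Pi}$ the horizontal component $\Pi_{2,0}$ is horizontally nondegenerate and $\gamma$ is the unique connection determined by (\ref{AC2}). Fixing adapted coordinates, I would write $\Pi_{2,0}=\tfrac{1}{2}\,\Pi^{ij}\,\hor^{\gamma}_{i}\wedge\hor^{\gamma}_{j}$ with $(\Pi^{ij})$ invertible, and record the only frame bracket that mixes bidegrees, namely $[\hor^{\gamma}_{i},\hor^{\gamma}_{j}]=R^{a}_{ij}\,\tfrac{\partial}{\partial y^{a}}$, where $R^{a}_{ij}\,\tfrac{\partial}{\partial y^{a}}=\Curv^{\gamma}(\tfrac{\partial}{\partial x^{i}},\tfrac{\partial}{\partial x^{j}})$. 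Since this bracket is purely vertical, the self-Schouten bracket of the horizontal bivector can only have components of bidegree $(3,0)$ and $(2,1)$; hence $\Pi_{2,0}$ is a Poisson tensor precisely when both $\cSch{\Pi_{2,0},\Pi_{2,0}}_{3,0}$ and $\cSch{\Pi_{2,0},\Pi_{2,0}}_{2,1}$ vanish.

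First I would settle the equivalence $(i)\Leftrightarrow(ii)$. The $(3,0)$-part vanishes identically by (\ref{J1}). For the $(2,1)$-part, a direct evaluation of the Schouten bracket shows that all derivative terms fall into bidegree $(3,0)$, while the entire structure-constant contribution is the one carrying the curvature, so that, up to a nonzero numerical factor, $\cSch{\Pi_{2,0},\Pi_{2,0}}_{2,1}=\Pi^{ik}\,\Pi^{jl}\,R^{a}_{kl}\,\hor^{\gamma}_{i}\wedge\hor^{\gamma}_{j}\wedge\tfrac{\partial}{\partial y^{a}}$; that is, this component is the curvature transported by the nondegenerate tensor $\Pi_{2,0}$. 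Because $(\Pi^{ij})$ is invertible on $M^{\Pi}$, the vanishing of $\cSch{\Pi_{2,0},\Pi_{2,0}}_{2,1}$ is equivalent to $R^{a}_{kl}=0$, i.e. to $\Curv^{\gamma}=0$. Combined with (\ref{J1}), this yields that $\Pi_{2,0}$ is a Poisson tensor if and only if $\Curv^{\gamma}=0$.

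Next I would close the cycle through $(iii)$. Assuming $(i)$, the previous step gives $\cSch{\Pi_{2,0},\Pi_{2,0}}_{2,1}=0$, and feeding this into (\ref{J2}) forces $\cSch{\Pi_{2,0},\Pi_{0,2}}_{2,1}=0$; on the other hand (\ref{J3}) gives $\cSch{\Pi_{2,0},\Pi_{0,2}}_{1,2}=0$. Since $\Pi_{2,0}$ has bidegree $(2,0)$ and $\Pi_{0,2}$ bidegree $(0,2)$, their Schouten bracket carries only components of bidegree $(2,1)$ and $(1,2)$, whence $\cSch{\Pi_{2,0},\Pi_{0,2}}=0$. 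Together with (\ref{J4}), which states that $\Pi_{0,2}$ is Poisson, and with $(ii)$, this is exactly the statement that $\Pi_{2,0}$ and $\Pi_{0,2}$ form a Poisson pair, so $(i)\Rightarrow(iii)$. The implication $(iii)\Rightarrow(ii)$ is immediate, since a Poisson pair has $\Pi_{2,0}$ Poisson by definition; together with $(i)\Leftrightarrow(ii)$ this establishes all three equivalences.

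The only genuinely computational point --- and the one I expect to be the main obstacle --- is the identification of $\cSch{\Pi_{2,0},\Pi_{2,0}}_{2,1}$ with the curvature term and the verification that horizontal nondegeneracy makes this correspondence a bijection. Once this tensorial identity is in place, the remaining equivalences reduce to bookkeeping with the given factorization (\ref{J1})--(\ref{J4}).
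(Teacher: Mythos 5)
Your proof is correct and is essentially the argument the paper intends: the paper states Proposition \ref{PropFlatPair} without a written proof, as a direct consequence of the bigraded factorization (\ref{J1})--(\ref{J4}) (citing the coupling case of \cite{YuV-2001}), and your identification of $\cSch{\Pi_{2,0},\Pi_{2,0}}_{2,1}$ with the curvature transported by the horizontally nondegenerate $\Pi_{2,0}$, combined with (\ref{J2}) and (\ref{J3}) to close the cycle through $(iii)$, is exactly that mechanism. Your bidegree bookkeeping (only $(3,0)$ and $(2,1)$ components for $\cSch{\Pi_{2,0},\Pi_{2,0}}$, only $(2,1)$ and $(1,2)$ for $\cSch{\Pi_{2,0},\Pi_{0,2}}$) and the bijectivity of the contraction on $M^{\Pi}$ both check out.
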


We have the following consequence of Proposition \ref{PropFlatPair}. Let $M$ be a $3$-dimensional fibered manifold over $2$-manifold $B$ and $\Pi$ an almost coupling Poisson tensor on $M$ via a connection $\gamma$. Then, $\Pi$ is a horizontal bivector field and hence in the coupling domain \,$M^{\Pi}=\{p \in M \,|\, \rank_{p}\Pi =2\}$,\, the connection $\gamma$ is necessarily flat. 

The symplectic foliation of an almost coupling Poisson tensor $\Pi$ is described as follows. Let $(S,\varpi)$ and $(L,\tau)$ be two symplectic leaves through a point \,$p \in M^{\Pi}$\, associated to the Poisson structures $\Pi$ and $\Pi_{0,2}$, respectively. Then, \,$\T_{p}S=\HH_{p} \oplus \T_{p}L$\, and for symplectic form we have \,$\varpi_{p}=\sigma_{p}\oplus\tau_{p}$,\, where $\sigma$ is the coupling form of $\Pi|_{M^{\Pi}}$. If $p$ approaches the boundary $\partial M^{\Pi}$, then the first term in this sum becomes singular. Moreover, one can show that \,$L \subset S \subset M^{\Pi}$\, and hence the coupling domain is invariant relative to the Hamiltonian flows. This follows from the property that the boundary $\partial{M^{\Pi}}$ consist of singular points of $\Pi$.

Recall that a immerse submanifold \,$N \subset M$\, is said to be a Poisson submanifold of $(M,\Pi)$ if \,$C^{\Pi}|_{N} \subseteq \T{N}$.\, In this case, there exists a unique Poisson structure on $N$ such that the inclusion \,$N \hookrightarrow M$\, is a Poisson map. Its clear that each symplectic leaf of $\Pi$ is a Poisson submanifold.

By using (\ref{CD}) we derive the following fact.

\begin{proposition}\label{PropSubPoisson}
Let $\Pi$ be an almost coupling Poisson tensor on $M$ via a horizontal subbundle $\mathbb{H}$ and \,$s:B \rightarrow M$\, a smooth section of $\pi$. Then, \,$s(B) \subset M$\, is a Poisson submanifold of $(M,\Pi)$ if and only if
	\begin{align}
		\Pi_{2,0}^{\sharp}\big( \V^{\circ}|_{s(B)} \big) \,&\subseteq\, \T{\big(s(B)\big)}, \label{PoiSub1} \\[0.15cm]
		\Pi_{0,2}^{\sharp}|_{s(B)} \,&=\, 0. \nonumber 
	\end{align}
\end{proposition}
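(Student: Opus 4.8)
The plan is to reduce the Poisson submanifold condition $C^{\Pi}|_{s(B)} \subseteq \T{\big(s(B)\big)}$ to the two stated requirements by combining the bigraded splitting (\ref{CD}) of the characteristic distribution with the transversality of the image of a section to the vertical subbundle. First I would record the two elementary bidegree facts that underlie (\ref{CD}): since $\Pi_{2,0} \in \Gamma\wedge^{2}\HH$, its sharp map kills horizontal covectors, $\Pi_{2,0}^{\sharp}\big(\HH^{\circ}\big)=0$, and its image is horizontal; dually, $\Pi_{0,2} \in \Gamma\wedge^{2}\V$ gives $\Pi_{0,2}^{\sharp}\big(\V^{\circ}\big)=0$ with vertical image. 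Using the ambient splitting $\T^{\ast}M = \V^{\circ} \oplus \HH^{\circ}$, this yields exactly the decomposition (\ref{CD}),
\[
C^{\Pi}_{p} \,=\, \Pi_{2,0}^{\sharp}\big(\V^{\circ}_{p}\big) \,\oplus\, \Pi_{0,2}^{\sharp}\big(\HH^{\circ}_{p}\big),
\]
the first summand lying in $\HH_{p}$ and the second in $\V_{p}$.

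Consequently the inclusion $C^{\Pi}|_{s(B)} \subseteq \T{\big(s(B)\big)}$ is equivalent to the conjunction of
\[
\Pi_{2,0}^{\sharp}\big(\V^{\circ}|_{s(B)}\big) \subseteq \T{\big(s(B)\big)} \qquad \text{and} \qquad \Pi_{0,2}^{\sharp}\big(\HH^{\circ}|_{s(B)}\big) \subseteq \T{\big(s(B)\big)}.
\]
The first inclusion is precisely condition (\ref{PoiSub1}), so it remains only to show that the second is equivalent to $\Pi_{0,2}^{\sharp}|_{s(B)}=0$. The key point---and the only place where the hypothesis that $s$ is a section is genuinely used---is that $\pi \circ s = \mathrm{id}_{B}$ forces $\dd{\pi}$ to restrict to an isomorphism on each tangent space $\T_{s(b)}\big(s(B)\big)$, so that
\[
\T_{s(b)}\big(s(B)\big) \cap \V_{s(b)} \,=\, \{0\}.
\]
Now $\Pi_{0,2}^{\sharp}\big(\HH^{\circ}|_{s(B)}\big)$ is vertical by the bidegree count above; if it is also contained in $\T\big(s(B)\big)$ it must lie in this trivial intersection and therefore vanish. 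Combined with $\Pi_{0,2}^{\sharp}\big(\V^{\circ}\big)=0$, this gives $\Pi_{0,2}^{\sharp}|_{s(B)}=0$. The converse is immediate, since $\Pi_{0,2}^{\sharp}|_{s(B)}=0$ makes the second inclusion hold trivially.

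I expect the only real subtlety to be this transversality step. One must avoid arguing via horizontality alone: although $\T\big(s(B)\big)$ is complementary to $\V$ along $s(B)$, it need not coincide with $\HH|_{s(B)}$, so the vanishing of the vertical part cannot be read off from the splitting $\T{M}=\HH \oplus \V$ directly and really does require the section property through the triviality of the intersection with $\V$. Everything else is routine bookkeeping of bidegrees using (\ref{CD}) and the dual decomposition (\ref{D2}).
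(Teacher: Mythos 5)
Your proposal is correct and takes essentially the same approach as the paper, which states the proposition as an immediate consequence of the bigraded decomposition (\ref{CD}): your write-up merely makes explicit the two details the paper leaves implicit, namely the splitting of the inclusion $C^{\Pi}|_{s(B)} \subseteq \T{\big(s(B)\big)}$ into its horizontal and vertical summands and the transversality $\T_{s(b)}\big(s(B)\big) \cap \V_{s(b)} = \{0\}$ forced by $\pi \circ s = \mathrm{id}_{B}$. Your closing caveat is also well taken, since the paper itself notes that $\HH|_{s(B)} \subseteq \T{\big(s(B)\big)}$ is only a sufficient condition for (\ref{PoiSub1}), not a necessary one.
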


So, condition (\ref{PoiSub1}) means that $\Pi_{2,0}$ is tangent to $s(B)$ and hence the restriction $\Pi_{2,0}|_{s(B)}$ gives the Poisson structure on $s(B)$. It is clear that condition (\ref{PoiSub1}) holds if \,$\HH|_{s(B)} \subseteq \T{\big(s(B)\big)}$.
	

For an open subset \,$U\subseteq M$,\, the space of Casimir functions of a Poisson tensor $\Pi$ restricted to $U$ is denoted by \,$\mathrm{Casim}(U,\Pi) \,:=\, \{ h \in \Cinf{U} \,\big|\, \mathbf{i}_{\dd{h}}\Pi=0\}$.\, The almost coupling property of $\Pi$ implies the following relation between the Casimir functions of $\Pi$ and its vertical part $\Pi_{0,2}$,
     \begin{equation*}
          \mathrm{Casim}(U,\Pi) \,\subseteq\, \mathrm{Casim}(U,\Pi_{0,2}).
     \end{equation*}
Note also that \,$\pi^{\ast}\Cinf{B} \subseteq \mathrm{Casim}(M,\Pi_{0,2})$.

We conclude this section with some remarks on the natural symmetry group of transformations which leave invariant the set of all almost coupling Poisson structures on a given fiber bundle \,$\pi:M\rightarrow B$.\, Let \,$g:M \rightarrow M$\, be a fiber preserving diffeomorphism on the total space, that is, $g$ descends to a diffeomorphism \,$g_{0}:B \rightarrow B$\, on the base, \,$\pi \circ g = g_{0} \circ \pi$.\, The key property is that, the tangent map \,$\dd{g}:\T{M} \rightarrow \T{M}$\, leaves invariant the vertical subbundle $\V$, \,$(\dd_{p}g)(\V_{p})=\V_{g(p)}$.\, As a consequence, the pull-back by $g$ of a Ehresmann connection $\gamma$ on $M$ is well defined as \,$(g^{\ast}\gamma)(X)=g^{\ast}(\gamma(g_{\ast}X))$,\, for \,$X \in \X{M}$.\, Then, we have the following fact \cite{YuV-2001,Va-04}; let \,$\Pi=\Pi_{2,0}+\Pi_{0,2}$\, be an almost coupling Poisson structure via a connection $\gamma$ and \,$g:M \rightarrow M$\, be a fiber preserving diffeomorphism. Then, the pull-back $g^{\ast}\Pi$ is again an almost coupling Poisson structure via the connection $g^{\ast}\gamma$, which is the sum of the bigraded components \,$\big(g^{\ast}\Pi\big)_{2,0}=g^{\ast}\Pi_{2,0}$\, and \,$\big(g^{\ast}\Pi\big)_{0,2}=g^{\ast}\Pi_{0,2}$.

     \section{The Case of $(2+3)$-Fibered Manifolds}

In this section, we focus on the detailed study of the further properties of almost coupling Poisson structure in the case of $5$-dimensional fibered manifolds with $2$-dimensional bases.

Let $M$ be a $5$-dimensional orientable manifold equipped with a volume form $\Omega$. Assume that $M$ is a fibered manifold in the sense that we are given a fiber bundle \,$\pi:M \rightarrow B$\, over an oriented $2$-manifold $B$ carrying an area (symplectic) form $\omega$. Therefore, the fibers of $\pi$ are $3$-dimensional orientable manifolds and \,$\rank{\V}=3$.

Suppose also that we are given a triple $(\gamma,\kappa,\beta)$ consisting of an Ehresmann connection \,$\gamma\in\Omega^{1}(E;\V)$,\, a smooth function \,$\kappa \in \Cinf{M}$\, and a vertical $1$-form \,$\beta \in \Gamma\HH^{\circ}$.\, Here \,$\HH=\ker{\gamma} \subset \T{M}$\, is the horizontal subbundle of rank $2$.

Denote the pull-back of the area form $\omega$ to $M$ by
     \begin{equation*}\label{OmegaHdef}
          \OmegaH \,=\, \pi^{\ast}\omega \,\in\, \Gamma\wedge^{2}\V^{\circ}.
     \end{equation*}
Then, there exists a unique nowhere vanishing $3$-form \,$\OmegaV \in \Gamma\wedge^{3}\HH^{\circ}$\, of bidegree $(0,3)$ such that
	\begin{equation}\label{OmegaDef}
		\Omega \,=\, \OmegaH \wedge \OmegaV.
	\end{equation}
Moreover, there exist nowhere vanishing ``dual'' tensor fields \,$\QH\in \Gamma\wedge^{2}\HH$\, and \,$\QV \in \Gamma\wedge^{3}\V$\, such that
     \begin{equation}
          \mathbf{i}_{\QH}\OmegaH \,=\, 1 \qquad \text{and} \qquad \mathbf{i}_{\QV}\OmegaV \,=\, 1. \label{PP}
     \end{equation}
Here the interior product of multivectors fields and forms is defined by the rule \,$\mathbf{i}_{X \wedge Y}=\mathbf{i}_{X} \circ \mathbf{i}_{Y}$,\, for any \,$X,Y \in \X{M}$.\, We emphasize that the elements $\OmegaV$ and $\QH$ are $\gamma$-dependent. Moreover, it follows from (\ref{PP}) that $\QH$ is the $\gamma$-horizontal lift of a bivector field on the base,
     \begin{equation}
          \QH \,=\, -\hor^{\gamma}\psi, \label{PP1}
     \end{equation}
where \,$\psi \in \Gamma\wedge^{2}\T{B}$\, is the nondegenerate Poisson tensor of the symplectic $2$-manifold $(B,\omega)$, \,$\mathbf{i}_{\psi}\omega=-1$.

Now, let us associate to the triple $(\gamma,\kappa,\beta)$ the following $2$-tensor field on $E$:
     \begin{equation}
          \Pi \,=\, \kappa\,\hor^{\gamma}\psi + P_{\beta} \label{EP1}
     \end{equation}
Here, the vertical bivector field
     \begin{equation}
          P_{\beta} \,:=\, -\mathbf{i}_{\beta}\QV \,\in\, \Gamma\wedge^{2}\V, \label{EP2}
     \end{equation}
is independent of $\gamma$ and uniquely determined by the restriction (a fibered $1$-form) \,$\overline{\beta}=\beta|_{\V} \in \Gamma\V^{\ast}$.\, Its clear that $\Pi$ is an almost coupling bivector field with components \,$\Pi_{2,0}=-\kappa\,\QH$\, and \,$\Pi_{0,2}=P_{\beta}$\, in the $\gamma$-bigraded decomposition. Introduce also the following $\gamma$-dependent forms
     \begin{equation}
          \theta \,:=\, -\mathbf{i}_{\QV}\dd_{1,0}\OmegaV \,\in\, \Gamma\V^{\circ} \label{F1}
     \end{equation}
and
     \begin{equation}
          \hspace{0.5cm} \varrho \,:=\, \mathbf{i}_{\QH}\dd_{2,-1}\OmegaV \,\in\, \Gamma\wedge^{2}\HH^{\circ}. \label{F2}
     \end{equation}
One can show that these forms are solutions to the equations
     \begin{equation}
          \dd_{1,0}\OmegaV \,=\, \theta \wedge \OmegaV \qquad \text{and} \qquad \dd_{2,-1}\OmegaV \,=\, \OmegaH \wedge \varrho. \label{F4}
     \end{equation}
We arrive at the following result.

\begin{theorem}\label{TeoJacobiOrientable}
The almost coupling bivector field $\Pi$ in (\ref{EP1}) is a Poisson tensor if and only if the triple $(\gamma,\kappa,\beta)$ satisfies the \ following integrability conditions:
     \begin{align}
          \dd_{0,1}\beta \wedge \beta \,&=\, 0, \label{IC1} \\[0.15cm]
          \kappa\,\left( \dd_{1,0}\beta \,-\, \theta \wedge \beta \right) \,&=\, 0, \label{IC2} \\[0.15cm]
          \dd_{0,1}\kappa \wedge \beta \,-\, \kappa^{2}\varrho \,&=\, 0. \label{IC3}
     \end{align}
\end{theorem}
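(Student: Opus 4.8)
The plan is to substitute the almost coupling bivector field (\ref{EP1}), with bigraded components $\Pi_{2,0}=-\kappa\,\QH$ and $\Pi_{0,2}=P_{\beta}$, into the bigraded factorization (\ref{J1})--(\ref{J4}) of the Jacobi identity and to read off each surviving equation as a differential-form identity. First I would note that (\ref{J1}) is vacuous: its value is a $(3,0)$-multivector, hence a section of $\wedge^{3}\HH$, which vanishes because $\rank\HH=\dim B=2$. Thus $\cSch{\Pi,\Pi}=0$ reduces to the three equations (\ref{J2}), (\ref{J3}), (\ref{J4}), and the task is to identify these with (\ref{IC3}), (\ref{IC2}), (\ref{IC1}), respectively. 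The identification proceeds through the duality between multivector fields and forms provided by the volume form $\Omega=\OmegaH\wedge\OmegaV$ and the dual tensors $\QH,\QV$ of (\ref{PP}): contracting a $(p,q)$-multivector into $\Omega$ yields a $(2-p,3-q)$-form, so (\ref{J2}) (a $(2,1)$-tensor) becomes a $(0,2)$-form equation matching (\ref{IC3}), while (\ref{J3}) (a $(1,2)$-tensor) becomes a $(1,1)$-form equation matching (\ref{IC2}).

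For the equivalence (\ref{J4}) $\Leftrightarrow$ (\ref{IC1}) I would work entirely in the fiberwise three-dimensional picture. Since $P_{\beta}=-\mathbf{i}_{\beta}\QV$ is a vertical bivector on the $3$-dimensional fibers and $\QV$ is dual to the vertical volume $\OmegaV$, the classical identity on an oriented $3$-manifold — a bivector is Poisson iff the wedge of its dual $1$-form with its exterior derivative vanishes — applies with $\dd_{0,1}$ in place of the fiber differential. Under $\mathbf{i}_{(\cdot)}\OmegaV$ the vertical Schouten bracket is converted into $\dd_{0,1}$, so that the $(0,3)$-vector $\cSch{P_{\beta},P_{\beta}}$ is dual, up to a nonzero factor, to the $(0,3)$-form $\dd_{0,1}\beta\wedge\beta$; its vanishing is precisely (\ref{IC1}), i.e. the statement that $\Pi_{0,2}$ is a Poisson tensor.

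For (\ref{J2}) $\Leftrightarrow$ (\ref{IC3}) and (\ref{J3}) $\Leftrightarrow$ (\ref{IC2}) I would compute the mixed Schouten components using $\QH=-\hor^{\gamma}\psi$ and $P_{\beta}=-\mathbf{i}_{\beta}\QV$, and dualize through $\Omega$. Because $\QH$ is a horizontal lift and $\psi$ is automatically Poisson on the $2$-manifold $B$, the self-bracket $\cSch{\Pi_{2,0},\Pi_{2,0}}_{2,1}$ is governed by the curvature of $\gamma$, which after contraction surfaces through the operator $\dd_{2,-1}$ acting on $\OmegaV$, i.e. through the form $\varrho$ characterized by (\ref{F2}) and (\ref{F4}); the cross term $2\cSch{\Pi_{2,0},\Pi_{0,2}}_{2,1}$ contributes the vertical derivative of $\kappa$ against $\beta$. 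Dualizing their sum to a $(0,2)$-form produces $\dd_{0,1}\kappa\wedge\beta-\kappa^{2}\varrho$, which is (\ref{IC3}). Likewise, $\cSch{\Pi_{2,0},\Pi_{0,2}}_{1,2}$ collects the horizontal derivative $\dd_{1,0}\beta$ of the vertical bivector together with the failure of $\OmegaV$ to be horizontally invariant, recorded by $\theta=-\mathbf{i}_{\QV}\dd_{1,0}\OmegaV$ via (\ref{F1}) and (\ref{F4}); its dual $(1,1)$-form is $\kappa(\dd_{1,0}\beta-\theta\wedge\beta)$, giving (\ref{IC2}).

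The hard part will be the precise dualization of the connection-dependent Schouten components in the last paragraph: one must keep track of the three pieces $\dd_{1,0}$, $\dd_{2,-1}$, $\dd_{0,1}$ of the exterior differential and their algebraic relations, and isolate exactly the structure forms $\theta$ and $\varrho$ fixed by (\ref{F1})--(\ref{F4}). This relies on a Koszul-type formula expressing the Schouten bracket of the lifts $\QH$ and $P_{\beta}$, under contraction with $\Omega$, in terms of $\dd$ applied to their duals; the curvature of $\gamma$ enters through $\dd_{2,-1}\OmegaV=\OmegaH\wedge\varrho$, while the non-unimodularity of horizontal transport enters through $\dd_{1,0}\OmegaV=\theta\wedge\OmegaV$. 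Once these identifications are in place, the equations (\ref{J2}), (\ref{J3}), (\ref{J4}) translate termwise into (\ref{IC3}), (\ref{IC2}), (\ref{IC1}), and, since every step is reversible, this yields the asserted equivalence in both directions.
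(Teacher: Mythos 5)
Your proposal is correct and follows essentially the same route as the paper's own (very brief) proof: substitute \,$\Pi_{2,0}=\kappa\,\hor^{\gamma}\psi$,\, $\Pi_{0,2}=P_{\beta}$\, into the bigraded factorization (\ref{J1})--(\ref{J4}), dismiss (\ref{J1}) by the dimension argument \,$\rank\HH=2$,\, and convert the remaining brackets into form equations via the duality furnished by $\Omega$, $\QH$, $\QV$ and the identities (\ref{F4}) defining $\theta$ and $\varrho$. Your matching (\ref{J4})$\leftrightarrow$(\ref{IC1}), (\ref{J3})$\leftrightarrow$(\ref{IC2}), (\ref{J2})$\leftrightarrow$(\ref{IC3}), including the bidegree bookkeeping, is exactly the ``direct verification'' the paper invokes, so no genuine gap remains beyond the routine computation both accounts defer.
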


The proof of this theorem is a direct verification of the fact that relations (\ref{J1})-(\ref{J4}), representing the bigraded factorization of the Jacobi identity for $\Pi$ (\ref{EP1}), are equivalent with equations (\ref{IC1})-(\ref{IC3}). Here we use some properties of the Schouten bracket and identities (\ref{F4}). Notice that the relation (\ref{J1}) holds automatically because of the dimension argument.

In particular, equation (\ref{IC1}) for $\beta$ represents the Jacobi identity for the vertical bivector field $P_{\beta}$ (\ref{EP2}) which induces the Poisson fiber bundle $(M \overset{\pi}{\rightarrow} B,P_{\beta})$. The corresponding fiberwise Poisson structure is obtained under the restriction of $P_{\beta}$ to the $3$-dimensional fibers of $\pi$. The restriction \,$\overline{\beta}=\beta|_{\V}\in\Gamma\V^{\ast}$\, is said to be a fibered Poisson $1$-form and satisfies the condition \,$\dd_{\mathcal{V}}\overline{\beta} \wedge \bar{\beta}=0$\, involving the foliated de Rham differential $\dd_{\mathcal{V}}$ along the fibers of $\pi$. Notice that the equation (\ref{IC1}) is invariant under the transformation \,$\beta \mapsto f\beta$,\, for arbitrary \,$f \in \Cinf{M}$.\, This is a fiber bundle version of the well-known property of the conformal invariance of the Jacobi identity in the $3$-dimensional case \cite{GN-93}.

A setup $(\gamma,\kappa,\beta)$ satisfying equations (\ref{IC1})-(\ref{IC3}) will be called a \emph{Poisson triple} relative to the pair $(\Omega,\omega)$.

\begin{remark}
Under changing the volume form \,$\widetilde{\Omega}=m\Omega$,\, where $m$ is a positive smooth function on $M$, a Poisson triple $(\gamma,\kappa,\beta)$ relative to $(\Omega,\omega)$ is transformed to the Poisson triple \,$\big( \widetilde{\gamma}=\gamma,\, \widetilde{\beta}=m\beta,\, \widetilde{\kappa}=\kappa \big)$\, relative to $(\widetilde{\Omega},\omega)$, which is a solution to equations (\ref{IC1})-(\ref{IC3}) with \,$\widetilde{\theta}=\theta + \frac{1}{m}\dd_{1,0}m$\, and \,$\widetilde{\varrho}=m\varrho$. \hfill{$\triangleright$}
\end{remark}

We have also the converse to Theorem \ref{TeoJacobiOrientable} which says that each almost coupling Poisson structure $\Pi$ on the fibered manifold $5$-manifold $M$ is represented by (\ref{EP1}).

\begin{theorem}\label{TeoACouptoTriple}
Every almost coupling bivector field \,$\Pi=\Pi_{2,0}+\Pi_{0,2}$\, via a horizontal subbundle $\HH$ is of the form (\ref{EP1}), where the Poisson triple $(\gamma,\kappa,\beta)$ is given by
     \begin{equation*}
          \gamma \,=\, \mathrm{pr}_{2}:\T{M} \overset{\HH}{\rightarrow} \V, \qquad \kappa \,=\, -\mathbf{i}_{\Pi_{2,0}}\pi^{\ast}\omega, \qquad \beta \,=\, -\mathbf{i} _{\Pi_{0,2}}\OmegaV.
     \end{equation*}
\end{theorem}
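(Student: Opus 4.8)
The plan is to treat the horizontal and vertical components of $\Pi$ separately, exploiting the rank constraints $\rank\HH = 2$ and $\rank\V = 3$ together with the four nowhere vanishing fields $\OmegaH,\OmegaV,\QH,\QV$ and their normalizations (\ref{PP}). First I fix the connection: because $\Pi$ is almost coupling via $\HH$, the natural projection $\gamma=\mathrm{pr}_{2}:\T{M}\to\V$ along $\HH$ is precisely the Ehresmann connection for which the mixed term vanishes, so that in the $\gamma$-bigraded decomposition $\Pi=\Pi_{2,0}+\Pi_{0,2}$ with $\Pi_{1,1}=0$. It then remains to match each surviving component with one term of (\ref{EP1}).

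For the horizontal term, the bundle $\wedge^{2}\HH$ has rank one and is trivialized by the nowhere vanishing section $\QH$; hence $\Pi_{2,0}=c\,\QH$ for a unique $c\in\Cinf{M}$. Contracting against $\OmegaH=\pi^{\ast}\omega$ and using $\mathbf{i}_{\QH}\OmegaH=1$ from (\ref{PP}) gives $\mathbf{i}_{\Pi_{2,0}}\pi^{\ast}\omega=c$, so that $c=-\kappa$ for $\kappa$ as in the statement. Invoking $\QH=-\hor^{\gamma}\psi$ from (\ref{PP1}), I obtain $\Pi_{2,0}=-\kappa\QH=\kappa\,\hor^{\gamma}\psi$, which is exactly the first summand of (\ref{EP1}).

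For the vertical term, I use that over the rank-three bundle $\V$ the contraction maps $\wedge^{2}\V\to\HH^{\circ}$, $P\mapsto\mathbf{i}_{P}\OmegaV$, and $\HH^{\circ}\to\wedge^{2}\V$, $\alpha\mapsto\mathbf{i}_{\alpha}\QV$, are mutually inverse vector bundle isomorphisms; this is exactly the content of the normalization $\mathbf{i}_{\QV}\OmegaV=1$ in (\ref{PP}). Setting $\beta:=-\mathbf{i}_{\Pi_{0,2}}\OmegaV\in\Gamma\HH^{\circ}$ and running it back through $\QV$, the two compensating minus signs in $\beta$ and in $P_{\beta}=-\mathbf{i}_{\beta}\QV$ cancel, giving $P_{\beta}=\Pi_{0,2}$. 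In a vertical frame $\{\partial/\partial y^{a}\}$ with dual coframe $\{\eta^{a}\}$ this is checked explicitly by writing $\QV=\lambda\,\partial/\partial y^{1}\wedge\partial/\partial y^{2}\wedge\partial/\partial y^{3}$ and $\OmegaV=\mu\,\eta^{1}\wedge\eta^{2}\wedge\eta^{3}$: the normalization forces $\lambda\mu=-1$, and iterating the interior products under the convention $\mathbf{i}_{X\wedge Y}=\mathbf{i}_{X}\circ\mathbf{i}_{Y}$ shows that the round trip multiplies a general $\Pi_{0,2}$ by $-\lambda\mu=1$. Combining the two parts yields $\Pi=\kappa\,\hor^{\gamma}\psi+P_{\beta}$, i.e. the representation (\ref{EP1}).

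Finally, since $\Pi$ is by hypothesis a Poisson tensor, Theorem \ref{TeoJacobiOrientable} guarantees that the triple $(\gamma,\kappa,\beta)$ just produced satisfies the integrability conditions (\ref{IC1})--(\ref{IC3}), i.e. it is a Poisson triple relative to $(\Omega,\omega)$. I expect the only delicate point to be the sign bookkeeping in the vertical duality: the identity $P_{\beta}=\Pi_{0,2}$ rests entirely on the normalization $\mathbf{i}_{\QV}\OmegaV=1$ and on the interior-product convention, so the factors of $-1$ coming from contracting the top vertical forms must be tracked carefully to confirm they cancel.
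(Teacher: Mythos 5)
Your proof is correct and follows exactly the route the paper leaves implicit: the paper states Theorem \ref{TeoACouptoTriple} without a written proof, relying on the dualities (\ref{PP}) --- the rank-one triviality of $\wedge^{2}\HH$ via $\QH$ together with (\ref{PP1}), and the mutually inverse contractions $\wedge^{2}\V \leftrightarrow \HH^{\circ}$ via $\OmegaV$ and $\QV$ --- plus Theorem \ref{TeoJacobiOrientable} to conclude that the resulting triple is Poisson, which is precisely your argument. Your sign bookkeeping is also right: under the convention $\mathbf{i}_{X\wedge Y}=\mathbf{i}_{X}\circ\mathbf{i}_{Y}$, the normalization $\mathbf{i}_{\QV}\OmegaV=1$ indeed forces $\lambda\mu=-1$ and the round trip $\Pi_{0,2}\mapsto -\mathbf{i}_{-\mathbf{i}_{\Pi_{0,2}}\OmegaV}\QV$ carries the factor $-\lambda\mu=1$, consistent with the paper's own identity $\Pi_{0,2}=P_{\beta}$.
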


Therefore, the assignment \,$(\gamma,\kappa,\beta) \mapsto \Pi$\, is surjective. But it is not one-to-one.

\begin{example}
Let $\gamma$ be an arbitrary connection on $M$ and \,$\beta \in \Gamma\HH^{\circ}$\, a vertical $1$-form satisfying the fiberwise Jacobi identity. Then, the triple $(\gamma,0,\beta)$ is a solution to equations (\ref{IC1})-(\ref{IC3}) which gives the vertical Poisson tensor \,$\Pi=P_{\beta}.$ \hfill{$\triangleleft$}
\end{example}

Now, let us describe some properties of the almost coupling Poisson tensor \,$\Pi=\Pi_{2,0}+\Pi_{0,2}$\, (\ref{EP1}) associated to a given Poisson triple $(\gamma,\kappa,\beta)$. In terms of the setup $(\gamma,\kappa,\beta)$, the Poisson bracket defined by $\Pi$ can be written as follows
     \begin{equation*}
          \{f,g\} \,=\, \kappa\,\frac{\dd_{1,0}f \wedge \dd_{1,0}g}{\pi^{\ast}\omega} \,+\, \frac{\dd_{0,1}f \wedge \dd_{0,1}g \wedge \beta}{\OmegaV}.
     \end{equation*}
The second term on the right hand side of this equality represents the Poisson bracket of \,$\Pi_{0,2}=P_{\beta}$\, which will be denoted by $\{,\}_{\beta}$. The Hamiltonian vector field \,$X_{F}=\mathbf{i}_{\dd{F}}\Pi$\, of a function \,$F \in \Cinf{M}$\, has the bigraded decomposition: \,$X_{F}=(X_{F})_{1,0}+(X_{F})_{0,1}$,\, where
     \begin{equation}
          \big(X_{F}\big)_{1,0} \,=\, \kappa\,\mathbf{i}_{\dd_{1,0}F}\hor^{\gamma}\psi, \qquad \big(X_{F}\big)_{0,1} \,=\, \mathbf{i}_{\dd{F}}P_{\beta} \,=\, -\mathbf{i}_{\dd_{0,1}F \wedge \beta}\QV. \label{PVF1}
     \end{equation}
It follows that the characteristic distribution $C^{\Pi}$ of $\Pi$ is generated by vector fields \,$\kappa\,\hor^{\gamma}u$\, and \,$\mathbf{i}_{\dd{F} \wedge \beta}\QV$,\, for any \,$u \in \X{B}$,\, $F \in \Cinf{M}$.\, The space of Casimir functions $\mathrm{Casim}(M,\Pi)$ consists of all \,$c \in \Cinf{M}$\, such that
	\begin{equation}\label{CasimACoup}
		\kappa\,\dd_{1,0}c \,=\, 0 \qquad \text{and} \qquad \dd_{0,1}c \wedge \beta \,=\, 0.
	\end{equation}
The second condition just determines the space of Casimir functions of the vertical Poisson structure $P_{\beta}$.

Denote by $\mathcal{Z}(\kappa)$ and $\mathcal{Z}(\beta)$ the zero sets of the function $\kappa$ and the $1$-form $\beta$, respectively. It is clear that $\mathcal{Z}(\beta)$ is also the zero set of $P_{\beta}$. Moreover, the coupling domain of $\Pi$ is just \,$M^{\Pi}=M\setminus\mathcal{Z}(\kappa)$\, and we have the decomposition:
     \begin{equation}
          M \,=\, M^{\Pi} \,\cup\, \partial\big(\mathcal{Z}(\kappa)\big) \,\cup\, \mathrm{Int}\big(\mathcal{Z}(\kappa)\big). \label{D}
     \end{equation}
So, the rank of $\Pi$ is zero in $\mathcal{Z}(\kappa)\cap\mathcal{Z}(\beta)$ and
     \begin{align*}
          \{\rank{\Pi}=2\} \,&=\, M^{\Pi} \,\cap\, \mathcal{Z}(\beta) \ \,\scalebox{0.9}{$\bigcup$}\, \ \mathcal{Z}(\kappa) \,\cap\, \big( M \setminus \mathcal{Z}(\beta) \big), \\[0.15cm]
          \{\rank{\Pi=4}\} \,&=\, M^{\Pi} \,\cap\, \big(M\setminus\mathcal{Z}(\beta)\big).
     \end{align*}
Therefore, the characteristic foliation $\mathcal{S}$ of $\Pi$ consists of symplectic leaves of dimension $0,2,4$. In particular, there are two kinds of $2$-dimensional symplectic leaves. If \,$\kappa(p)=0$\, and \,$\beta_{p} \neq 0$,\, then the symplectic leaf $S_{p}$ of $\Pi$ through \,$p \in M$\, coincides with the symplectic leaf of the restricted vertical Poisson structure $P_{\beta}|_{M_{b}}$ on the $3$-dimensional fiber $M_{b}$ of $\pi$ over \,$b=\pi(p)$.\, If \,$\kappa(p) \neq 0$\, and \,$\beta_{p}=0$,\, then by (\ref{IC3}) at each point \,$q \in S_{p}$,\, the curvature of the connection $\gamma$ vanishes and the tangent space to $S_{p}$ coincides with the horizontal plane  $\mathbb{H}_{q}$. The corresponding symplectic form is the pull-back of \,${\pi^{\ast}\omega}/{\kappa}$\, to $S_{p}$. Notice also that the coupling domain $M^{\Pi}$ is foliated by the symplectic leaves of dimension $2$ and $4$.

\begin{remark}
If \,$M^{\Pi} \cap \mathrm{supp}{\,\beta}=\O$,\, then the rank of $\Pi$ equals $0$ or $2$ and the Poisson structure $\Pi$ is of the Flaschka-Ratiu type \cite{Damianou}. Such class appears in the problem of the construction of Poisson structures with prescribed characteristic foliations \cite{Damianou,Nar-2015,PabSua-2018}. \hfill{$\triangleright$}
\end{remark}

Now, we observe that, according to decomposition (\ref{D}), equations (\ref{IC2}), (\ref{IC3}) for $(\gamma,\kappa,\beta)$ can be represented as follows.
     \begin{itemize}
          \item In the open subset $M^{\Pi}$:
               \begin{align}
                    \dd_{1,0}\beta \,+\, \beta \wedge \theta \,&=\, 0, \label{C2} \\[0.15cm]
                    \dd_{0,1}\left(\tfrac{1}{\kappa}\right) \wedge \beta \,+\, \varrho \,&=\, 0; \label{C3}
               \end{align}

          \item in the closed subset $\mathcal{Z}(\kappa)$:\quad $\dd_{0,1}\kappa \wedge \beta=0$.
     \end{itemize}
In fact, the last condition holds automatically in the interior $\mathrm{Int}\big(\mathcal{Z}(\kappa)\big)$ and hence, can be rewritten in the form:
     \begin{equation}
          (\dd_{0,1}\kappa)_{p} \wedge \beta_{p} \,=\, 0, \quad \forall\, p \in \partial\big(\mathcal{Z}(\kappa)\big). \label{C5}
     \end{equation}
From here and the relation \,$\partial\big(\mathcal{Z}(\kappa)\big)=\partial M^{\Pi}$,\, we conclude that condition (\ref{C5}) involves only the first variation of $\kappa$ at the boundary points of the coupling domain.

Moreover, relations (\ref{C2}) and (\ref{C3}) have the following interpretations. Using (\ref{F1}) and taking the interior product of both sides of equation (\ref{C2}) with $\QH  \wedge \QV$, we show that (\ref{C2}) is equivalent to the condition:
     \begin{equation}
          \dlie{\hor^{\gamma}u}P_{\beta} \,=\, 0, \quad \forall\, u \in \X{B}, \label{PA}
     \end{equation}
which means that the $\gamma$-horizontal lift $\hor^{\gamma}u$ is an \emph{infinitesimal automorphism} of the vertical Poisson bivector field $P_{\beta}$ restricted to $M^{\Pi}$. In other words, $\gamma$ is a \emph{Poisson connection} on the corresponding Poisson fiber bundle $\pi|_{M^{\Pi}}$.

We have the following consequence of properties (\ref{PA}) and (\ref{PP1}): the bivector field \,$\QH \in \Gamma\wedge^{2}\HH$\, given by (\ref{PP}) is a $2$-cocycle in the Lichnerowicz-Poisson cohomology \cite{Va-94} of the vertical Poisson structure, \,$\cSch{\QH,P_{\beta}} \,=\, 0$.

Next, taking the interior product of both sides (as $(0,2)$-forms) of \ (\ref{C3}) with $\QV$, we get the following ``curvature identity'' \cite{YuV-2001}:
     \begin{equation*}
          \Curv^{\gamma}\big(u_{1},u_{2} \big) \,=\, -\,\pi^{\ast}\big( \omega(u_{1},u_{2}) \big)\,P_{\beta}^{\sharp}\,\dd\left(\tfrac{1}{\kappa}\right),
     \end{equation*}
for any \,$u_{1},u_{2} \in \X{B}$.\, This relation tells us that the scalar factor $\kappa$ controls the curvature of $\gamma$ in $M^{\Pi}$ in the following sense: the connection $\gamma$ is flat, \,$\Curv^{\gamma}=0$,\, if and only if $\kappa$ is a Casimir function of $P_{\beta}|_{M^{\Pi}}$ , i.e., condition (\ref{C5}) is satisfied in the coupling domain.

As we have already noted (see, Proposition \ref{PropFlatPair}), the flatness of $\gamma$ implies that the bivector field $\QH$ is a Poisson tensor. So, in the flat case, the horizonal $\Pi_{2,0}$ and vertical $\Pi_{0,2}$ parts of the almost coupling Poisson tensor $\Pi$ form a Poisson pair.

\begin{example}
Putting $\beta \equiv 0$, we get that $(\gamma,\kappa,0)$ is a Poisson triple for arbitrary flat connection $\gamma$ on the fiber bundle \,$\pi:M \rightarrow B$\, and an arbitrary smooth function \,$\kappa \in \Cinf{B}$. \hfill{$\triangleleft$}
\end{example}

Summarizing the above facts and taking into account the properties of coupling Poisson structures \cite{YuV-2001,Va-04,JVYu} we formulate the following result characterizing almost coupling structures in dimensions $5$. 
 \newpage
\begin{proposition}
An almost coupling Poisson tensor $\Pi$ (\ref{EP1}) on $M$ has the following behavior:
     \begin{itemize}
       \item[$(i)$] in the open subset \,$M^{\Pi} = M \setminus \mathcal{Z}(\kappa)$,\, $\Pi$ is a coupling Poisson tensor associated to the geometric data $(\gamma,\sigma,P_{\beta})$, where the Poisson connection $\gamma$ is uniquely determined by (\ref{AC2}) and the coupling form is given by \,$\sigma=\tfrac{1}{\kappa}\,\pi^{\ast}\omega$;

       \item[$(ii)$] in the open subset $\mathrm{Int}\big(\mathcal{Z}(\kappa)\big)$, the bivector field coincides with the vertical Poisson tensor, \,$\Pi=P_{\beta}$;

       \item[$(iii)$] at the points of the boundary $\partial\big(\mathcal{Z}(\kappa)\big)$, the linear part of $\kappa$ is compatible with $\beta$ by condition (\ref{C5}).
     \end{itemize}
\end{proposition}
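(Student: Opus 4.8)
The plan is to treat the proposition as a synthesis of the facts already accumulated, organizing them along the intrinsic decomposition (\ref{D}) of $M$ into the coupling domain $M^{\Pi}=M\setminus\mathcal{Z}(\kappa)$, the interior $\mathrm{Int}(\mathcal{Z}(\kappa))$, and the boundary $\partial(\mathcal{Z}(\kappa))=\partial M^{\Pi}$. Each item $(i)$--$(iii)$ will be read off from the behaviour of the two bigraded components $\Pi_{2,0}=-\kappa\,\QH$ and $\Pi_{0,2}=P_{\beta}$ of $\Pi$ on the corresponding piece, so the argument is largely one of assembling the preceding results rather than proving anything new.

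For item $(i)$, first I would invoke the lemma asserting that the restriction $\Pi|_{M^{\Pi}}$ is a coupling tensor; the hypothesis $M^{\Pi}\neq\O$ holds off $\mathcal{Z}(\kappa)$ precisely because $\Pi_{2,0}=-\kappa\,\QH$ has maximal horizontal rank exactly where $\kappa$ does not vanish (as $\QH$ is nowhere zero and $\rank\HH=2$). To identify the Poisson connection, I would observe that, since $\Pi_{1,1}=0$ relative to $\gamma$ and $\mathbf{i}_{\alpha}\Pi_{0,2}=0$ for every $\alpha\in\V^{\circ}$, one has $\Pi^{\sharp}(\V^{\circ})=\Pi_{2,0}^{\sharp}(\V^{\circ})$, which equals $\HH$ on $M^{\Pi}$ by nondegeneracy; this is exactly the defining relation (\ref{AC2}), so $\gamma$ is the unique coupling connection there. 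Finally, the coupling form is computed from its characterizing identity $\mathbf{i}_{\mathbf{i}_{\alpha}\Pi_{2,0}}\sigma=-\alpha$: substituting $\Pi_{2,0}=-\kappa\,\QH$ and using the duality (\ref{PP}) together with the elementary contraction identity $\mathbf{i}_{\mathbf{i}_{\alpha}\QH}\OmegaH=\alpha$ for $\alpha\in\V^{\circ}$ yields $\sigma=\tfrac{1}{\kappa}\,\OmegaH=\tfrac{1}{\kappa}\,\pi^{\ast}\omega$. That $\gamma$ is genuinely a Poisson connection for $P_{\beta}$ on $M^{\Pi}$ is the already established equivalence of (\ref{C2}) with the infinitesimal-automorphism property (\ref{PA}).

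Items $(ii)$ and $(iii)$ are then immediate. On $\mathrm{Int}(\mathcal{Z}(\kappa))$ the function $\kappa$ vanishes identically, so $\Pi_{2,0}=-\kappa\,\QH\equiv 0$ and (\ref{EP1}) collapses to $\Pi=\Pi_{0,2}=P_{\beta}$, giving $(ii)$. For $(iii)$, I would restrict the integrability condition (\ref{IC3}) to the closed set $\mathcal{Z}(\kappa)$: there $\kappa^{2}\varrho=0$, leaving $\dd_{0,1}\kappa\wedge\beta=0$ on all of $\mathcal{Z}(\kappa)$; since this holds automatically in the interior (where $\dd_{0,1}\kappa=0$), its only content is at the boundary, which is precisely condition (\ref{C5}) constraining the first variation of $\kappa$.

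The proof is thus essentially bookkeeping, and the single genuinely computational point --- hence the main obstacle --- is the coupling-form verification in $(i)$: one must check the contraction identity $\mathbf{i}_{\mathbf{i}_{\alpha}\QH}\OmegaH=\alpha$ dual to (\ref{PP}) and confirm the overall sign, after which $\sigma=\tfrac{1}{\kappa}\,\pi^{\ast}\omega$ follows and the geometric data $(\gamma,\sigma,P_{\beta})$ of the coupling structure on $M^{\Pi}$ is completely pinned down.
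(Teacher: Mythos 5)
Your proposal is correct and follows essentially the same route as the paper, which states this proposition without a separate proof, introducing it as a summary of the facts established earlier in Section~4 (the restriction lemma for $M^{\Pi}$, the identification $\Pi_{2,0}=-\kappa\,\QH$, the equivalence of (\ref{C2}) with (\ref{PA}), and the reduction of (\ref{IC3}) on $\mathcal{Z}(\kappa)$ to (\ref{C5}) at the boundary). Your one genuinely computational addition --- verifying $\mathbf{i}_{\mathbf{i}_{\alpha}\QH}\OmegaH=\alpha$ and hence $\sigma=\tfrac{1}{\kappa}\,\pi^{\ast}\omega$ from $\mathbf{i}_{\mathbf{i}_{\alpha}\Pi_{2,0}}\sigma=-\alpha$ --- is sign-correct under the paper's conventions $\mathbf{i}_{\QH}\OmegaH=1$, $\QH=-\hor^{\gamma}\psi$, $\mathbf{i}_{\psi}\omega=-1$, and properly fills in the detail the paper leaves implicit.
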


As we know, the Poisson connection $\gamma$ is completely determined by $\Pi$ in the closure $\overline{{M}^{\Pi}}$ of the coupling domain. In the open subset $\mathrm{Int}\big(\mathcal{Z}(\kappa)\big)$, the connection $\gamma$ is not fixed and independent of $\beta$. Indeed, the freedom in the choice of $\gamma$ is given by transformation (\ref{CT1}), for arbitrary vector valued $1$-form $\Xi$ satisfying (\ref{CT2}) with \,$\mathrm{supp}\,\Xi \subset \mathrm{Int}(\mathcal{Z}(\kappa))$.\, But in the case when $M^{\Pi}$ is dense in $M$, the subset $\mathrm{Int}\big(\mathcal{Z}(\kappa)\big)$ is empty and hence the connection $\gamma$ is uniquely determined in the whole $M$.

\begin{remark}
Theorem \ref{TeoJacobiOrientable} and Theorem \ref{TeoACouptoTriple} remain true in the case when $M$ is a fibered oriented $(2+r)$-manifold over the $2$-dimensional base $B$, for any \,$r \geq 2$.\, In this case, every almost coupling Poisson tensor is defined by a triple $(\gamma,\kappa,\beta)$ satisfying the integrability conditions such that the only equation (\ref{IC1}) for $(r -2)$-form is modified. In particular, in the case \,$\dim{M}=4$,\, condition (\ref{IC1}) holds automatically and the integrability conditions for a smooth function \,$\beta \in \Cinf{M}$\, together with $\gamma$, $\kappa$ take the form \,$\kappa\,(\dd_{1,0}\beta-\beta\,\theta)=0$\, and \,$\beta\,\dd_{0,1}\kappa-\kappa^{2}\varrho=0$. \hfill{$\triangleright$}
\end{remark}

     \section{Unimodularity Criteria}

Here we describe some unimodularity criteria for almost coupling Poisson structures on $5$-dimensional fibered manifolds.

First, we recall that by the definition \cite{We-97}, the modular vector field $Z^{\Omega}$ of an oriented Poisson manifold $(M,\Pi)$ relative to a volume form $\Omega$ is defined by \,$\dlie{Z^{\Omega}}f \,:=\, \mathrm{div}_{\Omega}\big(\Pi^{\sharp}\dd{f}\big)$\, for \,$f \in \Cinf{M}$.\, As is known, $Z^{\Omega}$ is a Poisson vector field of $\Pi$ which is independent of the choice of a volume form modulo Hamiltonian vector fields. The Poisson structure $\Pi$ is said to be \emph{unimodular} if $Z^{\Omega}$ is a Hamiltonian vector field. In this case, there exists a volume form on $M$ which is invariant with respect to all Hamiltonian flows.

For example, a homogeneous Poisson structure $\Pi$ on the Euclidean space is unimodular if and only if the modular vector field of $\Pi$ relative to the Euclidean volume form is zero \cite{CIMP-94}.

Now, suppose we start again with an oriented fibered $5$-manifold $(\pi:M \rightarrow B,\Omega)$ with an oriented $2$-dimensional base $(B,\omega)$. Let $\Pi$ be the almost coupling Poisson tensor on $M$ associated to a Poisson triple $(\gamma,\beta,\kappa)$.

\begin{lemma}
The modular vector field $Z^{\Omega}$ of $\Pi$ on $M$ has the following bigraded decomposition \,$Z^{\Omega}=Z_{1,0}^{\Omega}+Z_{0,1}^{\Omega}$,\, where
     \begin{equation}
          Z_{1,0}^{\Omega} \,=\, -\mathbf{i}_{\kappa\theta \,+\, \dd_{1,0}\kappa}\,\hor^{\gamma}\psi \qquad \text{and} \qquad Z_{0,1}^{\Omega} \,=\, \mathbf{i}_{\dd_{0,1}\beta \,+\, \kappa\varrho}\QV. \label{BigradCampMod}
     \end{equation}
Here $1$-form $\theta$ and $2$-form $\varrho$ are given by formulas (\ref{F1}) and (\ref{F2}), respectively.
\end{lemma}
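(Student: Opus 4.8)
The plan is to route the whole computation through the divergence of $\Pi$ and carry it out in the $\gamma$-bigraded calculus. First I would recall the standard identity
     \begin{equation*}
          \mathbf{i}_{Z^{\Omega}}\Omega \,=\, -\dd\big(\mathbf{i}_{\Pi}\Omega\big),
     \end{equation*}
which follows from the definition \,$\dlie{Z^{\Omega}}f=\mathrm{div}_{\Omega}(\Pi^{\sharp}\dd{f})$,\, the Cartan formula \,$\dlie{X}\Omega=\dd\,\mathbf{i}_{X}\Omega$\, (as \,$\dd\Omega=0$),\, and the nondegeneracy of $\Omega$. This reduces the statement to computing the $3$-form $\mathbf{i}_{\Pi}\Omega$ and applying $\dd$.

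Next I would evaluate $\mathbf{i}_{\Pi}\Omega$ using \,$\Pi=\Pi_{2,0}+\Pi_{0,2}=-\kappa\,\QH+P_{\beta}$\, and the factorization \,$\Omega=\OmegaH\wedge\OmegaV$.\, Since $\Pi_{2,0}$ is horizontal and $\Pi_{0,2}$ vertical, each contracts only against the matching factor of $\Omega$; invoking the duality relations (\ref{PP}), namely \,$\mathbf{i}_{\QH}\OmegaH=1$,\, together with the identity \,$\mathbf{i}_{P_{\beta}}\OmegaV=-\beta$\, (which is exactly the defining relation for $\beta$ in Theorem \ref{TeoACouptoTriple}), I would obtain
     \begin{equation*}
          \mathbf{i}_{\Pi}\Omega \,=\, -\kappa\,\OmegaV \,-\, \OmegaH\wedge\beta.
     \end{equation*}

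Then I would apply the bigraded decomposition \,$\dd=\dd_{1,0}+\dd_{2,-1}+\dd_{0,1}$.\, The dimension constraints \,$\dim B=2$\, and \,$\rank{\V}=3$\, annihilate most terms: every form of bidegree $(3,q)$ or $(p,4)$ vanishes, so \,$\dd_{0,1}\OmegaV=0$\, and \,$\dd_{0,1}\kappa\wedge\OmegaV=0$,\, while \,$\dd\OmegaH=\pi^{\ast}\dd\omega=0$.\, Using the structural identities (\ref{F4}), \,$\dd_{1,0}\OmegaV=\theta\wedge\OmegaV$\, and \,$\dd_{2,-1}\OmegaV=\OmegaH\wedge\varrho$,\, the surviving terms combine into
     \begin{equation*}
          \mathbf{i}_{Z^{\Omega}}\Omega \,=\, \big(\dd_{1,0}\kappa+\kappa\theta\big)\wedge\OmegaV \,+\, \OmegaH\wedge\big(\dd_{0,1}\beta+\kappa\varrho\big).
     \end{equation*}
The first summand has bidegree $(1,3)$ and the second bidegree $(2,2)$, so they are to be identified with $\mathbf{i}_{Z_{1,0}^{\Omega}}\Omega$ and $\mathbf{i}_{Z_{0,1}^{\Omega}}\Omega$, respectively.

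Finally I would invert the contraction against $\Omega$. For a horizontal $1$-form $\alpha$ one checks from (\ref{PP}) that \,$\mathbf{i}_{\mathbf{i}_{\alpha}\QH}\big(\OmegaH\wedge\OmegaV\big)=\alpha\wedge\OmegaV$,\, and for a vertical $2$-form $\Lambda$ that \,$\mathbf{i}_{\mathbf{i}_{\Lambda}\QV}\big(\OmegaH\wedge\OmegaV\big)=\OmegaH\wedge\Lambda$.\, Applying these with \,$\alpha=\dd_{1,0}\kappa+\kappa\theta$\, and \,$\Lambda=\dd_{0,1}\beta+\kappa\varrho$,\, and using \,$\QH=-\hor^{\gamma}\psi$\, from (\ref{PP1}), yields precisely the stated formulas (\ref{BigradCampMod}). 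The main obstacle is the sign bookkeeping running through the interior-product calculus — in particular fixing the sign in \,$\mathbf{i}_{Z^{\Omega}}\Omega=-\dd(\mathbf{i}_{\Pi}\Omega)$\, and verifying the two inversion identities against the contraction conventions adopted in the paper; once these are pinned down, everything else is the bidegree accounting described above.
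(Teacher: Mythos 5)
Your proposal is correct and follows essentially the same route as the paper: both start from the identity \,$\mathbf{i}_{Z^{\Omega}}\Omega=-\dd\,\mathbf{i}_{\Pi}\Omega$,\, compute \,$\mathbf{i}_{\Pi}\Omega=-\kappa\,\OmegaV-\OmegaH\wedge\beta$\, from (\ref{PP}) and Theorem \ref{TeoACouptoTriple}, expand $\dd$ bigradedly using (\ref{F4}) and the dimension constraints to isolate the $(1,3)$- and $(2,2)$-components, and then invert the contraction against $\Omega$ by the $\QH$/$\QV$ duality --- the paper phrases this last step as contracting the two $4$-forms into the $5$-vector $\QH\wedge\QV$ via the identities \,$\mathbf{i}_{\delta}(\QH\wedge\QV)=-\mathbf{i}_{\mathbf{i}_{\QV}\delta}\QH$\, and \,$\mathbf{i}_{\widetilde{\delta}}(\QH\wedge\QV)=\mathbf{i}_{\mathbf{i}_{\QH}\widetilde{\delta}}\QV$,\, which is the same duality you use, read in the opposite direction. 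The sign bookkeeping you flag is the only delicate point, and your conventions are consistent with the paper's.
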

\begin{proof}
By the definition of $Z^{\Omega}$ we have \,$\mathbf{i}_{Z^{\Omega}}\Omega=-\dd{\mathbf{i}_{\Pi}}\Omega$.\, Moreover, for any $4$-forms $\delta$ and $\widetilde{\delta}$ of bidegree $(1,3)$ and $(2,2)$ respectively, the following equalities hold:
     \begin{equation*}
          \mathbf{i}_{\delta}\left(\QH\wedge \QV\right) \,=\, -\mathbf{i}_{\mathbf{i}_{\QV}\delta}\QH, \qquad \mathbf{i}_{\widetilde{\delta} }\left(\QH\wedge \QV\right) \,=\, \mathbf{i}_{\mathbf{i}_{\QH}\widetilde{\delta}}\QV.
     \end{equation*}
Applying these identities to the case when \,$\delta=-(\dd\mathbf{i}_{\Pi}\OmegaH) \wedge \OmegaV - (\mathbf{i}_{\Pi}\OmegaH)\,\dd_{1,0}\OmegaV$\, and \,$\widetilde{\delta}=-\OmegaH \wedge \dd{\mathbf{i}_{\Pi}}\OmegaV - (\mathbf{i}_{\Pi}\OmegaH)\,\dd_{2,-1}\OmegaV$,\, and taking into account (\ref{EP1})-(\ref{F2}), we verify the formulas for the bigraded components of the modular vector field.
\end{proof}

Consider the renormalization of the volume form by a nowhere vanishing function \,$a \in \Cinf{M}$:\, $a\Omega$.\, Then, the modular vector field and its bigraded components are changing by the rules \,$Z^{\,a\Omega}=Z^{\Omega}-\tfrac{1}{a}\,\mathbf{i}_{\dd{a}}\Pi$\, and
     \begin{align*}
          Z_{1,0}^{\,a\Omega} \,&=\, Z_{1,0}^{\Omega} \,+\, \kappa\,\mathbf{i}_{\dd_{1,0}a}\QH \,=\, \mathbf{i}_{\kappa\theta \,+\, \dd_{1,0}\kappa \,+\, \frac{\kappa}{a}\dd_{1,0}a}\QH, \\[0.15cm]
          Z_{0,1}^{\,a\Omega} \,&=\, Z_{0,1}^{\Omega} \,+\, \tfrac{1}{a}\,\mathbf{i}_{\dd_{0,1}a \wedge \beta}\QV \,=\, \mathbf{i}_{\dd_{0,1}\beta \,+\, \kappa\varrho \,+\, \frac{1}{a}\dd_{0,1}a \wedge \beta}\QV.
     \end{align*}

Now, putting \,$a=\frac{1}{\kappa}$\, and taking into account (\ref{IC3}), we conclude that, in the coupling domain \,$M^{\Pi}=M\setminus\mathcal{Z}(\kappa)$,\, the modular vector field $Z^{\Omega^{\prime}}$ relative to the volume form \,$\Omega^{\prime} := \frac{1}{\kappa}\,\Omega \,=\, \OmegaH \wedge \tfrac{1}{\kappa}\OmegaV$\, has the components
	\begin{equation}
		Z_{1,0}^{\Omega^{\prime}} \,=\, \kappa\,\mathbf{i}_{\theta}\QH \qquad \text{and} \qquad Z_{0,1}^{\Omega^{\prime}} \,=\, \mathbf{i}_{\dd_{0,1}\beta}\QV. \label{CamModPrima}
	\end{equation}
Assume that the vertical $1$-form \,$\beta \in \Gamma\HH^{\circ}$\, is $\dd_{0,1}$-closed,
     \begin{equation}
          \dd_{0,1}\beta \,=\, 0. \label{CL1}
     \end{equation}
It follows from here and (\ref{CamModPrima}) that the vertical component of the modular vector field relative to the volume form $\Omega'$ vanishes, \,$Z_{0,1}^{\Omega^{\prime}}=0$.

\begin{lemma}\label{LemaBeta01Cerrada}
Under assumption (\ref{CL1}) the vertical Poisson tensor $P_{\beta}$ is unimodular in $M$. Moreover, in the coupling domain \,$M^{\Pi}=M\setminus\mathcal{Z}(\kappa)$,\, the following conditions hold:
     \begin{itemize}
          \item the horizontal $1$-form \,$\theta \in \Gamma\V^{\circ}$\, in (\ref{F1}) takes values in the space $\mathrm{Casim}(M^{\Pi},P_{\beta})$ of a Casimir functions of the vertical Poisson structure;
          \item $\theta$ is covariantly constant,
               \begin{equation}
                    \dd_{1,0}\theta \,=\, 0. \label{CL3}
               \end{equation}
     \end{itemize}
\end{lemma}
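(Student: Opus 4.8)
The plan is to treat the three assertions separately, reducing each to the vanishing of a top‑degree form assembled from $\OmegaH$ and $\OmegaV$, and then exploiting the two structural identities \,$\dd\OmegaH=\pi^{\ast}\dd\omega=0$\, (any $2$-form on the surface $B$ is closed) and \,$\mathbf{i}_{P_{\beta}}\OmegaV=-\beta$,\, the latter being exactly the formula \,$\beta=-\mathbf{i}_{\Pi_{0,2}}\OmegaV$\, of Theorem~\ref{TeoACouptoTriple} with \,$\Pi_{0,2}=P_{\beta}$.\, For the unimodularity of $P_{\beta}$ on $M$, I would first compute \,$\mathbf{i}_{P_{\beta}}\Omega=\OmegaH\wedge\mathbf{i}_{P_{\beta}}\OmegaV=-\OmegaH\wedge\beta$,\, using that $\OmegaH\in\Gamma\wedge^{2}\V^{\circ}$ is annihilated by the vertical bivector $P_{\beta}$. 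Then \,$\mathbf{i}_{Z^{\Omega}_{P_{\beta}}}\Omega=-\dd\,\mathbf{i}_{P_{\beta}}\Omega=\OmegaH\wedge\dd\beta$,\, and in the bigraded expansion of $\dd\beta$ the summands $\OmegaH\wedge\dd_{1,0}\beta$ and $\OmegaH\wedge\dd_{2,-1}\beta$ vanish for dimensional reasons (horizontal degree $\geq 3$), leaving only $\OmegaH\wedge\dd_{0,1}\beta$. Assumption (\ref{CL1}) annihilates this term, so $Z^{\Omega}_{P_{\beta}}=0$ on all of $M$ and $P_{\beta}$ is unimodular.

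For the Casimir property of $\theta$, I would differentiate the coupling‑domain identity (\ref{C2}), \,$\dd_{1,0}\beta+\beta\wedge\theta=0$,\, by $\dd_{0,1}$. The coboundary relation \,$\dd_{0,1}\dd_{1,0}=-\dd_{1,0}\dd_{0,1}$\, together with (\ref{CL1}) gives \,$\dd_{0,1}\dd_{1,0}\beta=-\dd_{1,0}\dd_{0,1}\beta=0$,\, hence \,$\dd_{0,1}(\beta\wedge\theta)=0$.\, Expanding this derivation and discarding the term carrying $\dd_{0,1}\beta=0$ yields \,$\beta\wedge\dd_{0,1}\theta=0$\, on $M^{\Pi}$. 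Writing $\theta=\theta_{i}\,\dd{x^{i}}$ and noting $\dd_{0,1}\dd{x^{i}}=0$, the $(1,2)$-form $\beta\wedge\dd_{0,1}\theta=\sum_{i}\beta\wedge\dd_{0,1}\theta_{i}\wedge\dd{x^{i}}$ splits along the independent horizontal factors $\dd{x^{i}}$, forcing $\beta\wedge\dd_{0,1}\theta_{i}=0$, i.e.\ $\dd_{0,1}\theta_{i}\wedge\beta=0$, for each $i$. By the second relation in (\ref{CasimACoup}) this says precisely that every coefficient $\theta_{i}$ — equivalently every function $\langle\theta,\hor^{\gamma}u\rangle$, $u\in\X{B}$ — lies in $\mathrm{Casim}(M^{\Pi},P_{\beta})$.

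For the covariant constancy $\dd_{1,0}\theta=0$, I would apply $\dd_{1,0}$ to the first identity in (\ref{F4}), \,$\dd_{1,0}\OmegaV=\theta\wedge\OmegaV$.\, Since $\theta\wedge\theta=0$, this gives \,$\dd_{1,0}^{2}\OmegaV=\dd_{1,0}\theta\wedge\OmegaV$.\, On the other hand the coboundary relation \,$\dd_{1,0}^{2}=-\dd_{2,-1}\dd_{0,1}-\dd_{0,1}\dd_{2,-1}$,\, combined with $\dd_{0,1}\OmegaV=0$ (bidegree $(0,4)$) and the second identity in (\ref{F4}), yields \,$\dd_{1,0}^{2}\OmegaV=-\dd_{0,1}(\OmegaH\wedge\varrho)=-\OmegaH\wedge\dd_{0,1}\varrho$.\, Now on $M^{\Pi}$ relation (\ref{C3}) reads \,$\varrho=-\dd_{0,1}\!\big(\tfrac{1}{\kappa}\big)\wedge\beta$,\, so $\dd_{0,1}^{2}=0$ and (\ref{CL1}) together force $\dd_{0,1}\varrho=0$. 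Hence $\dd_{1,0}\theta\wedge\OmegaV=0$, and since $\OmegaV$ is a nowhere vanishing vertical $3$-form, wedging with $\OmegaV$ is injective on $(2,0)$-forms; therefore $\dd_{1,0}\theta=0$.

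The main obstacle is the bookkeeping in this last step: one must correctly pass from the second‑order identity $\dd_{1,0}^{2}\OmegaV=\dd_{1,0}\theta\wedge\OmegaV$ to an expression in $\varrho$ through the coboundary relations, tracking which bidegree components drop out by the $2$-dimensionality of $\HH$ and the $3$-dimensionality of $\V$, and then recognizing that the curvature datum $\varrho$ is itself $\dd_{0,1}$-closed precisely because $\beta$ is. The remaining steps are short once the two identities $\dd\OmegaH=0$ and $\mathbf{i}_{P_{\beta}}\OmegaV=-\beta$ are in hand.
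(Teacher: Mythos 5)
Your proof is correct and follows essentially the same route as the paper: the Casimir property of $\theta$ via applying $\dd_{0,1}$ to (\ref{C2}) and the covariant constancy via $\dd_{1,0}^{2}\OmegaV$, the coboundary relations and the bigrading argument are precisely the paper's computations, while for the unimodularity of $P_{\beta}$ you recompute from the definition what the paper simply reads off from (\ref{BigradCampMod}) with $\kappa=0$. You even make explicit a detail the paper uses without comment, namely that $\dd_{0,1}\varrho=0$ on $M^{\Pi}$ follows from (\ref{C3}) together with $\dd_{0,1}^{2}=0$ and (\ref{CL1}).
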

\begin{proof}
First, the unimodularity of $P_{\beta}$ follows directly from (\ref{BigradCampMod}) and the closedness condition (\ref{CL1}). Is clear that the corresponding invariant volume form is given by (\ref{OmegaDef}). Next, by (\ref{C2}) and (\ref{CL1}) we have
     \begin{equation*}
          \dd_{0,1}(\beta \wedge \theta) \,=\, -\dd_{0,1} \circ \dd_{1,0}\beta \,=\, \dd_{1,0} \circ \dd_{0,1}\beta \,=\, 0,
     \end{equation*}
and hence \,$0=\dd_{0,1}(\beta\wedge\theta)=-\beta \wedge \dd_{0,1}\theta$.\, This implies that \,$\mathbf{i}_{\mathrm{hor}^{\gamma}u}\theta \in \mathrm{Casim}(M^{\Pi},P_{\beta})$,\, for any \,$u \in \X{B}$.\, Next,
using the identities \,$\dd_{0,1}\OmegaH=0$,\, $\dd_{0,1}\OmegaV=0$\, and $\dd_{0,1}\varrho=0$,\, by (\ref{F4}) we get
     \begin{align*}
          \dd_{1,0}\theta \wedge \OmegaV  \,&=\, \dd_{1,0}^{2}\OmegaV \ =\ -\dd_{2,-1} \circ \dd_{0,1}\OmegaV \,-\, \dd_{0,1} \circ \dd_{2,-1}\OmegaV \\
               \,&=\, -\dd_{0,1} \circ \dd_{2,-1}\OmegaV \,=\, -\dd_{0,1}\OmegaH \wedge \varrho \,-\, \OmegaH \wedge \dd_{0,1}\varrho \,=\, 0.
     \end{align*}
From here and the bigrading argument, we derive (\ref{CL3}).
\end{proof}

We arrive at the following result which is an almost-coupling version of a general unimodularity criterion for coupling Poisson structures due to \cite{AEYu-17}.

\begin{theorem}\label{TeoUnimodA-Coup}
Let $\Pi$ be an almost coupling Poisson tensor on $M$ associated to a Poisson triple $(\gamma,\beta,\kappa)$. Suppose that the closedness condition (\ref{CL1}) holds. Then,
	\begin{itemize}
		\item[$(i)$] in the coupling domain \,$M^{\Pi}=M\setminus\mathcal{Z}(\kappa)$,\, $\Pi$ is unimodular if and only if $\theta$ is $\dd_{1,0}$-exact in the sense that
				\begin{equation}
					\theta \,=\, -\dd_{1,0}h, \label{ThetaExact}
				\end{equation}
for a certain Casimir function \,$h \in \mathrm{Casim}(M^{\Pi},P_{\beta})$.\, The volume form on $M^{\Pi}$ given by
     \begin{equation}
          \Omega_{1}^{\mathrm{inv}} \,:=\, \tfrac{\mathrm{e}^{h}}{\kappa}\,\pi^{\ast}\omega\wedge\OmegaV \label{CH2},
     \end{equation}
is invariant with respect to the flows of all Hamiltonian vector fields of $\Pi$;

		\item[$(ii)$] in the open domain $\mathrm{Int}\,\mathcal{Z}(\kappa)$ of $M$, \,$\Pi=P_{\beta}$\, is unimodular and the corresponding invariant volume form is given by
		\begin{equation}
          \Omega_{2}^{\mathrm{inv}} \,:=\, \pi^{\ast}\omega\wedge\OmegaV \label{CH3}.
     \end{equation}
	\end{itemize}
\end{theorem}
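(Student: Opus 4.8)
The plan is to use the standard fact that a Poisson structure is unimodular precisely when some (equivalently, any) volume form can be renormalized to an invariant one: on an open set $U$ the bivector $\Pi$ is unimodular if and only if there is a nowhere-vanishing $a \in \Cinf{U}$ with $Z^{a\Omega}=0$, since by the renormalization rule $Z^{a\Omega}=Z^{\Omega}-\frac{1}{a}\mathbf{i}_{\dd a}\Pi=Z^{\Omega}-\mathbf{i}_{\dd\log a}\Pi$ the class of $Z^{\Omega}$ modulo Hamiltonian fields is exactly the obstruction. I would treat the two regions of decomposition (\ref{D}) separately, because the renormalization $\Omega'=\frac{1}{\kappa}\Omega$ used for the coupling part is only available where $\kappa$ does not vanish.

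For part $(i)$ I would work entirely in $M^{\Pi}=M\setminus\mathcal{Z}(\kappa)$ and start from the components (\ref{CamModPrima}) of the modular vector field relative to $\Omega'$, which under the closedness hypothesis (\ref{CL1}) reduce to $Z_{1,0}^{\Omega'}=\kappa\,\mathbf{i}_{\theta}\QH$ and $Z_{0,1}^{\Omega'}=0$. Renormalizing $\Omega'$ by $a=\mathrm{e}^{h}$ and splitting the rule $Z^{a\Omega'}=Z^{\Omega'}-\mathbf{i}_{\dd h}\Pi$ into bidegrees via (\ref{PVF1}), I would obtain
     \begin{equation*}
          Z_{1,0}^{a\Omega'}=\kappa\,\mathbf{i}_{\theta+\dd_{1,0}h}\QH, \qquad Z_{0,1}^{a\Omega'}=\mathbf{i}_{\dd_{0,1}h\wedge\beta}\QV.
     \end{equation*}
Existence of an invariant volume form is then equivalent to the simultaneous vanishing of both components. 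Since $\QV$ is nowhere vanishing, the vanishing of the second amounts to $\dd_{0,1}h\wedge\beta=0$, i.e. $h\in\mathrm{Casim}(M^{\Pi},P_{\beta})$ by (\ref{CasimACoup}); and since $\kappa\neq0$ on $M^{\Pi}$ and $\QH=-\hor^{\gamma}\psi$ is horizontally nondegenerate (so $\mathbf{i}_{\cdot}\QH$ is injective on $\Gamma\V^{\circ}$), the vanishing of the first amounts to $\theta+\dd_{1,0}h=0$, which is exactly (\ref{ThetaExact}). The ``if'' direction then delivers the invariant volume form $a\Omega'=\mathrm{e}^{h}\Omega'=\tfrac{\mathrm{e}^{h}}{\kappa}\,\pi^{\ast}\omega\wedge\OmegaV=\Omega_{1}^{\mathrm{inv}}$ of (\ref{CH2}).

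For part $(ii)$ I would observe that $\kappa\equiv0$ (hence $\dd\kappa=0$) on the open set $\mathrm{Int}\,\mathcal{Z}(\kappa)$, so that $\Pi=P_{\beta}$ there. Substituting $\kappa=0$ into the unrenormalized components (\ref{BigradCampMod}) and invoking (\ref{CL1}) gives $Z_{1,0}^{\Omega}=0$ and $Z_{0,1}^{\Omega}=\mathbf{i}_{\dd_{0,1}\beta}\QV=0$, so the original volume form $\Omega=\OmegaH\wedge\OmegaV=\pi^{\ast}\omega\wedge\OmegaV=\Omega_{2}^{\mathrm{inv}}$ of (\ref{CH3}) is already invariant; this is also the content of the first assertion of Lemma \ref{LemaBeta01Cerrada}.

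The main obstacle, and the step requiring the most care, is the bidegree-by-bidegree argument in part $(i)$: one must verify that the vanishing of $Z^{a\Omega'}$ splits cleanly into two independent scalar conditions and, crucially, strip the interior products off using the nondegeneracy of $\QH$ and the fact that $\QV$ is nowhere zero, so that the $(1,0)$ equation becomes the genuine $\dd_{1,0}$-exactness (\ref{ThetaExact}) rather than merely a contraction identity, while the $(0,1)$ equation forces $h$ into $\mathrm{Casim}(M^{\Pi},P_{\beta})$ — the very space in which $\theta$ already takes values by Lemma \ref{LemaBeta01Cerrada}, which is what makes the two requirements on $h$ mutually consistent.
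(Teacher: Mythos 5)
Your proof is correct and follows essentially the same route as the paper's: both pass to the renormalized volume form $\Omega^{\prime}=\tfrac{1}{\kappa}\,\Omega$ on $M^{\Pi}$, read off the bigraded components (\ref{CamModPrima}) under hypothesis (\ref{CL1}) together with the renormalization rule for modular vector fields, and settle part $(ii)$ on $\mathrm{Int}\,\mathcal{Z}(\kappa)$ via Lemma \ref{LemaBeta01Cerrada} and the equality $\Pi=P_{\beta}$ there. Your bidegree-by-bidegree argument, stripping the contractions using the nondegeneracy of $\QH$ and the injectivity of $\tau\mapsto\mathbf{i}_{\tau}\QV$, simply makes explicit the converse step that the paper compresses into ``by using again (\ref{CamModPrima}) one can show that $\theta$ is $\dd_{1,0}$-exact with primitive $h=\ln|a|$,'' so the two proofs are the same in substance.
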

\begin{proof}
Suppose that condition (\ref{ThetaExact}) holds. Consider the volume form $\Omega'$ which is well defined in $M^{\Pi}$. Then, it follows from (\ref{CamModPrima}) and (\ref{PVF1}) that the unimodular vector field of $\Pi$ with respect to $\Omega'$ is Hamiltonian, \,$Z^{\Omega'} = -\kappa\,\mathbf{i}_{\dd_{1,0}h}\QH = (X_{h})_{1,0} = X_{h}$\, and hence \,$\Omega_{1}^{\mathrm{inv}}=\mathrm{e}^{h}\Omega'$\, is an invariant volume form. The converse is also true. Indeed, the unimodularity of $\Pi$ implies that $a\Omega'$ is an invariant volume form for a certain nowhere vanishing function $a$ on $M^{\Pi}$. Then, by using again (\ref{CamModPrima}) one can show that $\theta$ is $\dd_{1,0}$-exact with primitive \,$h=\ln|a|$. Finally, the item (ii) follows from Lemma \ref{LemaBeta01Cerrada} and the equality \,$\Pi=P_{\beta}$\, in the subset $\mathcal{Z}(\kappa)$.
\end{proof}

\begin{remark}
The restriction of the covariant derivative $\dd_{1,0}$ to the spaces of horizontal forms with values in the space of Casimir functions of $P_{\beta}$ is well-defined and gives rise to a cochain complex, called the de Rham-Casimir complex \cite{AEYu-17}. So, Theorem \ref{TeoUnimodA-Coup} tells us that under assumption (\ref{CL1}), there exists a cohomological obstruction to the unimodularity property of the coupling Poisson structure $\Pi|_{M^{\Pi}}$ which is related with the generalized Reeb class \cite{AEYu-17}.
\end{remark}
Now we formulate the following global unimodularity criterion.

\begin{theorem}\label{TeoUnimodGlob}
Under hypothesis (\ref{CL1}), the Poisson structure \,$\Pi=\kappa\,\mathrm{hor}^{\gamma}\psi+P_{\beta}$\, is unimodular in the whole $M$ if and only if the condition (\ref{ThetaExact}) holds for a certain \,$h\in$ $\mathrm{Casim}(M^{\Pi},P_{\beta})$\, and there exists a nowhere vanishing function \,$K \in C_{M}^{\infty}$\, whose restriction to $\mathrm{Int}\mathcal{Z}(\kappa)$ is a Casimir function of $P_{\beta}$ and such that
     \begin{equation}
          \kappa|_{M^{\Pi}} \,=\, e^{h}K|_{M^{\Pi}} \label{KappaCasim}
     \end{equation}
Moreover, the volume form \,$\Omega^{\operatorname{inv}}=\frac{1}{K}\pi^{\ast}\omega\wedge\OmegaV$\, is invariant with respect to all Hamiltonian flows on $(M,\Pi)$.
\end{theorem}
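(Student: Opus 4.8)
The plan is to prove both implications by analysing the single candidate invariant volume form $\Omega^{\operatorname{inv}}=\frac{1}{K}\Omega=\frac{1}{K}\pi^{\ast}\omega\wedge\OmegaV$, i.e. by computing the modular vector field $Z^{a\Omega}$ for $a=\frac{1}{K}$ through its bigraded components and the renormalization rules for $Z^{a\Omega}_{1,0}$, $Z^{a\Omega}_{0,1}$ stated just after (\ref{BigradCampMod}). Since by (\ref{D}) we have $M=M^{\Pi}\cup\partial(\mathcal{Z}(\kappa))\cup\mathrm{Int}(\mathcal{Z}(\kappa))$ with $\partial(\mathcal{Z}(\kappa))=\partial M^{\Pi}$ nowhere dense, the open set $M^{\Pi}\cup\mathrm{Int}(\mathcal{Z}(\kappa))$ is dense in $M$; hence it suffices to control $Z^{a\Omega}$ separately on $M^{\Pi}$ and on $\mathrm{Int}(\mathcal{Z}(\kappa))$ and then invoke continuity of the smooth modular vector field.

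For sufficiency I would set $a=\frac{1}{K}$, so that $\frac{\kappa}{a}\dd_{1,0}a=-\frac{\kappa}{K}\dd_{1,0}K$ and $\frac{1}{a}\dd_{0,1}a=-\frac{1}{K}\dd_{0,1}K$. On $M^{\Pi}$ I substitute $\kappa=e^{h}K$ from (\ref{KappaCasim}) and $\theta=-\dd_{1,0}h$ from (\ref{ThetaExact}); expanding $\dd_{1,0}\kappa$, the horizontal bracket $\kappa\theta+\dd_{1,0}\kappa-\frac{\kappa}{K}\dd_{1,0}K$ cancels identically, giving $Z^{a\Omega}_{1,0}=0$. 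For the vertical part I eliminate $\varrho$ by (\ref{C3}), writing $\kappa\varrho=\frac{1}{\kappa}\dd_{0,1}\kappa\wedge\beta$; since $h\in\mathrm{Casim}(M^{\Pi},P_{\beta})$ the contribution $\dd_{0,1}h\wedge\beta$ vanishes by (\ref{CasimACoup}), so $\kappa\varrho=\frac{1}{K}\dd_{0,1}K\wedge\beta$, which together with (\ref{CL1}) cancels against $-\frac{1}{K}\dd_{0,1}K\wedge\beta$ and yields $Z^{a\Omega}_{0,1}=0$. On $\mathrm{Int}(\mathcal{Z}(\kappa))$ the horizontal component vanishes because $\kappa\equiv 0$ (hence $\dd_{1,0}\kappa=0$), while the vertical component reduces to $\mathbf{i}_{-\frac{1}{K}\dd_{0,1}K\wedge\beta}\QV$, which is zero precisely because $K|_{\mathrm{Int}(\mathcal{Z}(\kappa))}$ is a Casimir of $P_{\beta}$, i.e. $\dd_{0,1}K\wedge\beta=0$ by (\ref{CasimACoup}). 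Thus $Z^{a\Omega}=0$ on the dense set, hence on all of $M$, and $\frac{1}{K}\Omega$ is invariant.

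For necessity, global unimodularity provides a nowhere-vanishing $a$ with $Z^{a\Omega}=0$ on $M$; I put $K:=\frac{1}{a}$. Restricting the invariant volume form to $M^{\Pi}$ and applying Theorem \ref{TeoUnimodA-Coup}$(i)$ yields (\ref{ThetaExact}), namely $\theta=-\dd_{1,0}h$ with $h\in\mathrm{Casim}(M^{\Pi},P_{\beta})$, together with the invariant form $\frac{e^{h}}{\kappa}\Omega$ there. On $\mathrm{Int}(\mathcal{Z}(\kappa))$ the vanishing of $Z^{a\Omega}_{0,1}$ forces $\dd_{0,1}K\wedge\beta=0$, so $K$ restricts to a Casimir of $P_{\beta}$. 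On $M^{\Pi}$ the two invariant forms $a\Omega$ and $\frac{e^{h}}{\kappa}\Omega$ differ by the nowhere-vanishing ratio $c:=\frac{a\kappa}{e^{h}}$, and comparing their modular vector fields gives $X_{\ln|c|}=0$, i.e. $\ln|c|\in\mathrm{Casim}(M^{\Pi},P_{\beta})$ with $\dd_{1,0}\ln|c|=0$ by (\ref{CasimACoup}). Replacing $h$ by $h+\ln|c|$ (which leaves $\theta=-\dd_{1,0}h$ intact and keeps $h$ a Casimir) turns the relation $\kappa=c\,e^{h}K$ into $\kappa=e^{h}K$ on $M^{\Pi}$, which is (\ref{KappaCasim}).

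The hard part will be the global bookkeeping across the singular boundary $\partial(\mathcal{Z}(\kappa))$: one must check that the single $K$ extracted from the global volume factor is simultaneously compatible with the coupling-domain normalisation $\kappa=e^{h}K$ (where $h$ is defined only on $M^{\Pi}$ and must blow up as $\kappa\to 0$ at the boundary) and with the Casimir condition inside $\mathrm{Int}(\mathcal{Z}(\kappa))$, while remaining smooth and nowhere vanishing on all of $M$. In particular, the sign of the Casimir factor $c$ must be locally constant and consistent with $\mathrm{sgn}(\kappa)$ near $\partial(\mathcal{Z}(\kappa))$, which is exactly the information encoded by the existence of a nowhere-vanishing $K$. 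The density argument for sufficiency and this gluing/sign consistency for necessity are the two steps demanding care; the bigraded cancellations are otherwise routine consequences of (\ref{C3}), (\ref{CL1}), (\ref{CasimACoup}) and the renormalization rules.
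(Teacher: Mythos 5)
Your proposal is correct and follows essentially the paper's own route: the necessity argument (extract $K=1/a$ from a global invariant volume form, compare with $\Omega_{1}^{\mathrm{inv}}$ on $M^{\Pi}$ to get a Casimir factor $c$, and absorb it via $h\mapsto h+\ln|c|$ using that Casimirs of $\Pi|_{M^{\Pi}}$ are $\dd_{1,0}$-constant by (\ref{CasimACoup})) is the paper's proof verbatim, and your sufficiency just unfolds explicitly the bigraded cancellations that the paper delegates to Theorem \ref{TeoUnimodA-Coup}, adding the (correct, and implicitly used by the paper) density-of-$M^{\Pi}\cup\mathrm{Int}\,\mathcal{Z}(\kappa)$ argument. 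Your closing caveat about the sign of $c$ near $\partial\mathcal{Z}(\kappa)$ is a genuine subtlety that the paper's $\ln|\kappa_{0}|$ step glosses over as well, so flagging it is a small improvement rather than a deviation.
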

\begin{proof}
The sufficiency part follows from the Theorem \ref{TeoUnimodA-Coup} and the relations \,$\Omega^{\mathrm{inv}}=\,\Omega^{\mathrm{inv}}_{1}$\, in \,$M^{\Pi}=M\setminus\mathcal{Z}(\kappa)$\, and \,$\Omega^{\mathrm{inv}}=K^{-1}\,\Omega^{\mathrm{inv}}_{2}$\, in $\mathrm{Int}\,\mathcal{Z}(\kappa)$,\, where $\Omega^{\mathrm{inv}}_{1}$ and $\Omega^{\mathrm{inv}}_{2}$ are defined by (\ref{CH2}) and (\ref{CH3}), respectively. To prove the necessity, suppose that $\Pi$ is unimodular in $M$. Then, there exists a nowhere vanishing \,$K \in C^{\infty}_{M}$\, such that \,$K^{-1}\,\pi^{\ast}\omega \wedge \OmegaV$\, is a (global) invariant volume form. On the other hand, by Theorem \ref{TeoUnimodA-Coup}, in the domain $M^{\Pi}$, the volume form (\ref{CH2}) is also invariant for a fixed primitive $h$ of $\theta$. Then, the above two invariants volume forms are related by a multiplicative Casimir factor \,$\kappa_{0} \in \mathrm{Casim}(M^{\Pi},\Pi)$,\, and hence,
	\begin{equation}\label{Kappa0Casim}
          \kappa|_{M^{\Pi}} \,=\, \mathrm{e}^{h}\kappa_{0}\,K|_{M^{\Pi}}
    \end{equation}
It follows from (\ref{CasimACoup}) that the primitive $h$ of $\theta$ is uniquely defined up to adding an arbitrary Casimir function of $\Pi|_{M^{\Pi}}$. We conclude that (\ref{Kappa0Casim}) is transformed to (\ref{KappaCasim}) under the changing \,$h \mapsto h+\ln|\kappa_{0}| \in \mathrm{Casim}(M^{\Pi},P_{\beta})$.\, Finally, the fact that \,$K \in \mathrm{Casim}(\mathrm{Int}\,\mathcal{Z}(\kappa),P_{\beta})$\, follows from the invariance of the volume forms $\Omega_{2}$ and $K^{-1}\Omega_{2}$ with respect to all Hamiltonian flows on $(\mathrm{Int}\,\mathcal{Z}(\kappa),P_{\beta})$.
\end{proof}

It follows from the proof of the Theorem \ref{TeoUnimodGlob} that condition (\ref{KappaCasim}) can be replaced by (\ref{Kappa0Casim}). A realization of condition (\ref{Kappa0Casim}) is given in Example \ref{EjmBR3Unimod}.

In general, if the zero set $\mathcal{Z}(\kappa)$ is not empty, then the factor $\tfrac{1}{\kappa}$ in (\ref{CH2}) has a singularity at \,$p \in \partial\big(\mathcal{Z}(\kappa)\big)$\, and the invariant volume form $\Omega_{1}^{\mathrm{inv}}$ is not necessarily to be extended to the whole $M$ by gluing with $\Omega_{2}^{\mathrm{inv}}$.

\begin{example}
In the $5$-dimensional Euclidean space \,$\mathbb{R}^{5}=\mathbb{R}_{x}^{2}\oplus\mathbb{R}_{y}^{3}$\, regarded as a trivial fiber bundle over $\mathbb{R}_{x}^{2}$, consider the following bivector field
     \begin{equation*}
          \Pi \,=\, \big(y_{1}^{2}-x_{1}^{2}-x_{2}^{2}\big)\left(\,  \frac{\partial}{\partial x_{1}} \wedge \frac{\partial}{\partial x_{2}} \,+\, \left( \frac{\partial}{\partial x_{1}} - \frac{\partial}{\partial x_{2}} \right) \wedge \left( \frac{\partial}{\partial y_{2}} + \frac{\partial}{\partial y_{3}}\right) \,\right) \,+\ y_{1}^{2}\,\frac{\partial}{\partial y_{2}}\wedge\frac{\partial}{\partial y_{3}}.
     \end{equation*}
Then, the bivector field $\Pi$ is an almost coupling Poisson tensor via the connection
     \begin{equation*}
          \gamma \,=\, \dd{y_{1}} \otimes \frac{\partial}{\partial y_{1}} \,+\, \dd{y_{2}} \otimes \frac{\partial}{\partial y_{2}} \,+\, \dd{y_{3}} \otimes \frac{\partial}{\partial y_{3}} \,-\, \dd(x_{1}+x_{2}) \otimes \left(\frac{\partial}{\partial y_{2}} + \frac{\partial}{\partial y_{3}}\right).
     \end{equation*}
In this case, \,$\beta=y_{1}^{2}\,\dd{y_{1}}$\, and \,$\theta=0$, and hence, formula (\ref{CH2}), gives an invariant volume form in the coupling domain \,$M^{\Pi}=\mathbb{R}^{5}\setminus\{y_{1}^{2}=x_{1}^{2}+x_{2}^{2}\}$.\, But, one can show that the function \,$\kappa=y_{1}^{2}-x_{1}^{2}-x_{2}^{2}$\, does not satisfy (\ref{KappaCasim}) and hence, by Theorem \ref{TeoUnimodGlob} the Poisson tensor $\Pi$ is not unimodular in the whole space $\mathbb{R}^{5}$. This fact is also derived from the observation that the unimodular vector field of the homogeneous Poisson tensor $\Pi$ with respect to the Euclidean volume form in $\mathbb{R}^{5}$ is nontrivial. \hfill{$\triangleleft$}
\end{example}

     \section{Symmetries of Integrability Conditions}

Here we described some symmetries of equations (\ref{IC1})-(\ref{IC3}), that is, some transformations which preserve the solutions of these equations. We start with so-called gauge transformations \cite{BuRa-03}. Our point is to describe a class of gauge transformations preserving the almost coupling property.

Suppose we are given a Poisson triple $(\gamma,\kappa,\beta)$ on an oriented fibered $5$-manifold $(\pi:M\rightarrow B,\Omega,\omega)$.

To formulate our results let us introduce the following notations. To any $1$-forms $\alpha_{1}$ and $\alpha_{2}$ on $M$, we assign a $2$-form \,$\{\alpha_{1}\wedge\alpha_{2}\}_{\beta}$\, on $M$ given by
     \begin{equation*}
          \{\alpha_{1}\wedge\alpha_{2}\}_{\beta}(Y_{1},Y_{2}) \,:=\, \{\alpha_{1}(Y_{1}),\alpha_{2}(Y_{2})\}_{\beta} \,-\, \{\alpha_{1}(Y_{2}),\alpha_{2}(Y_{1})\}_{\beta},
     \end{equation*}
for \,$Y_{1},Y_{2} \in \X{M}$.\, Recall that $\{,\}_{\beta}$ denotes the Poisson bracket on $M$ associated to the vertical Poisson $1$-form $\beta$, \,$\{f_{1},f_{2}\}_{\beta} = -\QV(\beta,\dd{f_{1}},\dd{f_{2}})$.

Suppose we are given a horizontal $1$-form \,$\mu \in \Gamma\V^{\circ}$\, and a Casimir function \,$c \in \Cinf{M}$\, of the bracket $\{,\}_{\beta}$, \,$\dd_{0,1}c \wedge \beta = 0$.\, Denote
     \begin{equation*}
          \varkappa_{\mu} \,=\, \varkappa_{\mu}(\gamma,\beta) \,:=\, \frac{\dd_{1,0}^{\gamma}\mu \,+\, \frac{1}{2}\{\mu \wedge \mu\}_{\beta}}{\OmegaH} \,\in\, \Cinf{M}.
     \end{equation*}
Here we use the fact that the nowhere vanishing $2$-form $\OmegaH$ and \,$\dd_{1,0}^{\gamma}\mu + \frac{1}{2}\{\mu \wedge \mu\}_{\beta}$\, are $2$-forms on $M$ of bidegree $(2,0)$. Now, we can associate to the pair $(\mu,c)$ a transformation
     \begin{equation}
          \mathcal{T}_{\mu,c}:\,(\gamma,\kappa,\beta) \,\longmapsto\, (\widetilde{\gamma},\widetilde{\kappa},\widetilde{\beta}) \label{TR1}
     \end{equation}
given by
     \begin{equation}
          \widetilde{\gamma}(X) \,=\, \gamma(X) \,+\, \mathbf{i}_{\dd_{0,1}\mu\wedge\beta}(\QV \wedge X), \qquad \widetilde{\kappa} \,=\, \frac{\kappa}{1-\kappa\,(\varkappa_{\mu}-c)}, \qquad \widetilde{\beta} \,=\, \beta, \qquad \label{TR2}
     \end{equation}
for any \,$X \in \X{M}$.

\begin{proposition}\label{PropTransfGauge}
Transformation (\ref{TR1})-(\ref{TR2}) preserves the solutions of equations (\ref{IC1})-(\ref{IC3}), that is, $(\widetilde{\gamma},\widetilde{\kappa},\widetilde{\beta})$ is again a Poisson triple.
\end{proposition}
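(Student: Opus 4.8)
\emph{Strategy.} The plan is to factor the transformation $\mathcal{T}_{\mu,c}$ into two elementary symmetries that can be analysed separately: a Casimir rescaling controlled by $c$ and a gauge transformation controlled by $\mu$. Writing the coupling form as $\sigma=\tfrac1\kappa\,\pi^{\ast}\omega$, the formula for $\widetilde{\kappa}$ in (\ref{TR2}) is equivalent to $\tfrac{1}{\widetilde\kappa}=\tfrac1\kappa-\varkappa_\mu+c$, so that $\widetilde\sigma=\sigma-\big(\dd_{1,0}\mu+\tfrac12\{\mu\wedge\mu\}_\beta\big)+c\,\pi^{\ast}\omega$. This splits the change of coupling form into a $\mu$-part built from $\dd\mu$ and a $c$-part, while $\widetilde\beta=\beta$ reflects that neither piece touches the vertical Poisson tensor $P_\beta$. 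Accordingly, I expect the almost coupling bivector field $\widetilde\Pi$ of $(\widetilde\gamma,\widetilde\kappa,\widetilde\beta)$ to be the gauge transform of $\Pi$ by the exact $2$-form $-\dd\mu$, post-composed with the Casimir shift $\tfrac1\kappa\mapsto\tfrac1\kappa+c$. Each factor preserves the Jacobi identity for an independent reason, and the converse Theorem \ref{TeoACouptoTriple} then translates this back into the statement that $(\widetilde\gamma,\widetilde\kappa,\widetilde\beta)$ solves (\ref{IC1})--(\ref{IC3}).

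\emph{The Casimir shift.} First I would dispose of the $c$-part as a stand-alone symmetry of the integrability conditions valid for \emph{any} triple. Fixing $\gamma,\beta$ and setting $\tfrac{1}{\kappa'}=\tfrac1\kappa+c$, equations (\ref{IC1}) and (\ref{IC2}) are immediate, since $\beta$ is untouched and $\kappa'$ vanishes exactly where $\kappa$ does, so the bracketed factor in (\ref{IC2}) is unchanged on $M^{\Pi}$ and the whole equation is trivial on $\mathcal{Z}(\kappa)$. For (\ref{IC3}), applying $\dd_{0,1}$ to $\tfrac{1}{\kappa'}=\tfrac1\kappa+c$ gives $-\tfrac{\dd_{0,1}\kappa'}{\kappa'^{2}}=-\tfrac{\dd_{0,1}\kappa}{\kappa^{2}}+\dd_{0,1}c$; wedging with $\beta$ and invoking the Casimir hypothesis $\dd_{0,1}c\wedge\beta=0$ kills the last term, so $\tfrac{\dd_{0,1}\kappa'\wedge\beta}{\kappa'^{2}}=\tfrac{\dd_{0,1}\kappa\wedge\beta}{\kappa^{2}}=\varrho$, which is precisely (\ref{IC3}) for $\kappa'$. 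Thus the Casimir shift is a symmetry on the nose, and the genuine work lies in the $\mu$-part.

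\emph{The gauge factor.} Setting $c=0$, I would realise the bivector field of $(\widetilde\gamma,\kappa/(1-\kappa\varkappa_\mu),\beta)$ as the Bursztyn--Radko/Severa--Weinstein gauge transform $\Pi\mapsto\Pi_{B}$, with $\Pi_B^{\sharp}=\Pi^{\sharp}(\mathrm{id}+B^{\flat}\Pi^{\sharp})^{-1}$, by the exact $2$-form $B=-\dd\mu=-\dd_{1,0}\mu-\dd_{0,1}\mu$. Since $B$ is closed, $\Pi_B$ is automatically Poisson \cite{SeWe-01,BuRa-03}; it then remains to check that $\Pi_B$ is again almost coupling and to read off its triple. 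Here the $(2,0)$-part $-\dd_{1,0}\mu$ of $B$ shifts the coupling form (producing the $\varkappa_\mu$ in $\widetilde\kappa$), while the mixed $(1,1)$-part $-\dd_{0,1}\mu$ forces a change of Ehresmann connection in order to restore $(\Pi_B)_{1,1}=0$; solving for the unique absorbing connection, exactly as in the proof that $\Pi|_{M^{\Pi}}$ is coupling, should yield the stated $\widetilde\gamma(X)=\gamma(X)+\mathbf{i}_{\dd_{0,1}\mu\wedge\beta}(\QV\wedge X)$. Theorem \ref{TeoACouptoTriple} then identifies the triple of $\Pi_B$ with $(\widetilde\gamma,\kappa/(1-\kappa\varkappa_\mu),\beta)$, and Theorem \ref{TeoJacobiOrientable} certifies that it solves (\ref{IC1})--(\ref{IC3}). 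Applying the Casimir shift of the previous paragraph to this triple replaces $\tfrac1\kappa-\varkappa_\mu$ by $\tfrac1\kappa-\varkappa_\mu+c$, recovering $\widetilde\kappa$ and hence the full transformation (\ref{TR1})--(\ref{TR2}).

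\emph{Main obstacle.} The hard part is the explicit inversion of $\mathrm{id}+B^{\flat}\Pi^{\sharp}$ and the verification that its nonlinear contribution reassembles into the quadratic term $\tfrac12\{\mu\wedge\mu\}_\beta$ sitting inside $\varkappa_\mu$. Since $B^{\flat}\Pi^{\sharp}$ takes values in $\V^{\circ}$ and is built from the horizontal component $\Pi_{2,0}$ and the vertical Poisson bivector $P_\beta$, the geometric series $(\mathrm{id}+B^{\flat}\Pi^{\sharp})^{-1}=\sum_{k}(-B^{\flat}\Pi^{\sharp})^{k}$ mixes the $(2,0)$ and $(1,1)$ pieces of $B$, and it is precisely the cross term between $\dd_{0,1}\mu$ and $P_\beta$ that produces $\{\mu\wedge\mu\}_\beta$; tracking these bidegrees carefully is the crux of the computation. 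A secondary point is the domain of definition: the gauge transform exists only where $1-\kappa\varkappa_\mu\neq0$, so that $\widetilde\kappa$ is finite, and one must check that the resulting triple glues consistently across $\partial\mathcal{Z}(\kappa)$ in accordance with the decomposition (\ref{D}). This last point is harmless, because on $\mathrm{Int}\,\mathcal{Z}(\kappa)$ both $\Pi$ and $\Pi_B$ reduce to $P_\beta$ and the connection there is unconstrained.
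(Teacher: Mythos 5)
Your proposal is correct in outline but proceeds by a genuinely different route than the paper. The paper's proof is a direct verification at the level of triples: it writes the new connection as $\widetilde{\gamma}=\gamma-\Xi$ with $\Xi=-P_{\beta}^{\sharp}\circ(\dd_{0,1}\mu)^{\flat}$ as in (\ref{CT4}), gets (\ref{IC1}) from the general fact that $\widetilde{\beta}=\beta-\Xi^{\ast}\beta$ satisfies the vertical Jacobi identity (here $\Xi^{\ast}\beta=0$, so $\widetilde{\beta}=\beta$), gets (\ref{IC2}) from the observation that $\widetilde{\gamma}$ is a Poisson connection for $P_{\beta}$, and gets (\ref{IC3}) from the transition rule for $\Curv^{\gamma}$ under (\ref{CT1}) together with the identity $\mathbf{i}_{\Curv^{\gamma}(u_{1},u_{2})}\OmegaV=-\OmegaH(\hor^{\gamma}u_{1},\hor^{\gamma}u_{2})\,\varrho$. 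You instead factor $\mathcal{T}_{\mu,c}$ into a Casimir shift and a Severa--Weinstein/Bursztyn--Radko gauge transformation by the exact form $-\dd\mu$, importing the Jacobi identity from \cite{SeWe-01,BuRa-03} and reducing the problem to identifying the transformed structure as almost coupling with the stated triple. This is essentially the interpretation the paper itself records only \emph{after} its proof (the leafwise gauge form $\Upsilon=-\dd\mu+c\,\OmegaH$), turned into the proof; what it buys is a conceptual explanation of where the transformation comes from and why the quadratic term $\tfrac12\{\mu\wedge\mu\}_{\beta}$ must appear, while the paper's computation buys self-containedness and uniform treatment of the singular set $\mathcal{Z}(\kappa)$. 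Your decision to peel off the $c$-part by a direct computation is not merely cosmetic but necessary: for a fiberwise Casimir $c$ that is not basic, $\Upsilon$ is only leafwise closed, so the citation to gauge transformations by closed $2$-forms would not cover the $c$-shift, and your separate verification of (\ref{IC3}) via $\dd_{0,1}c\wedge\beta=0$ correctly fills that hole. Two caveats: the crux you flag --- expanding $(\mathrm{id}+B^{\flat}\Pi^{\sharp})^{-1}$ by bidegree and checking that the absorbed connection and scaling are exactly (\ref{TR2}) --- is left undone, and is where all the actual computation of the paper's proof lives in your scheme; and your appeal to the unique absorbing connection "as in the coupling lemma" only determines $\widetilde{\gamma}$ on $M\setminus\mathcal{Z}(\kappa)$, so the identification of the triple on $\mathcal{Z}(\kappa)$ needs the continuity/verticality argument you sketch in your last paragraph (on $\mathrm{Int}\,\mathcal{Z}(\kappa)$ one checks directly that $(B^{\flat}P_{\beta}^{\sharp})^{2}=0$ and the gauge transform fixes $P_{\beta}$), after which Theorem \ref{TeoACouptoTriple} and Theorem \ref{TeoJacobiOrientable} apply as you say.
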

\begin{proof}
First, let us start with a Poisson triple $(\gamma,\kappa,\beta)$ and an arbitrary triple $(\widetilde{\gamma},\widetilde{\kappa},\widetilde{\beta})$. Then, the connections $\gamma$ and $\widetilde{\gamma}$ are related by (\ref{CT1}), where a vector valued $1$-form \,$\Xi \in \Omega^{1}(M;\T{M})$\, satisfies the condition (\ref{CT2}). We observe that if the $1$-form \,$\widetilde{\beta} \in \Gamma\HH^{\circ}$\, is defined by
     \begin{equation}
          \widetilde{\beta} \,:=\, \beta-\Xi^{\ast}\beta, \label{CT3}
     \end{equation}
then $\widetilde{\beta}$ satisfies the condition \,$\widetilde{\beta}|_{\V}=\beta|_{\V}$\, and equation \,$\dd_{0,1}^{\widetilde{\gamma}}\widetilde{\beta} \wedge \widetilde{\beta}=0$.\, Here \,$\Xi^{\ast}:\T^{\ast}M \rightarrow \T^{\ast}M$\, is the adjoint vector bundle morphism. Now, assuming that
     \begin{equation}
          \Xi \,=\, -P_{\beta}^{\sharp} \circ (\dd_{0,1}\mu)^{\flat}, \label{CT4}
     \end{equation}
for a horizontal $1$-form \,$\mu \in \Gamma\V^{\circ}$,\, we conclude that condition (\ref{CT2}) is satisfied and the connection $\gamma$ is related with $\widetilde{\gamma}$ by (\ref{TR2}). Moreover, it follows from (\ref{CT4}) that \,$\Xi^{\ast}\beta=0$\, and hence, by equality (\ref{CT3}), the last relation in (\ref{TR2}) holds. Another consequence of (\ref{CT4}) is that, $\widetilde{\gamma}$ is a Poisson connection relative to $P_{\beta}$. This means that relation (\ref{IC2}) holds for the triple in (\ref{TR2}). Finally, the fact that the triple $(\widetilde{\gamma},\widetilde{\kappa},\widetilde{\beta})$ satisfies (\ref{IC3}) follows from the transition rule for the curvature of $\gamma$ under transformation (\ref{CT1}) and the relation \,$\mathbf{i}_{\mathrm{Curv}^{\gamma}(u_{1},u_{2})}\OmegaV=-\OmegaH(\hor^{\gamma}u_{1},\hor^{\gamma}u_{1})\,\varrho$,\, for all \,$u_{1},u_{2} \in \X{B}$.
\end{proof}

Now, let \,$\Pi=\Pi_{2,0}+\Pi_{0,2}$\, and \,$\widetilde{\Pi}=\widetilde{\Pi}_{2,0}+\widetilde{\Pi}_{0,2}$\, be two almost coupling Poisson structures associated with some Poisson triples $(\gamma,\kappa,\beta)$ and $(\widetilde{\gamma},\widetilde{\kappa},\widetilde{\beta})$, respectively. Suppose that these triples are related by a transformation (\ref{TR1}) for a certain pair $(\mu,c)$. Then, \,$\Pi_{0,2}=\widetilde{\Pi}_{0,2}=P_{\beta}$\, because of \,$\beta=\widetilde{\beta}$\, and hence the transformation \,$\Pi \mapsto \widetilde{\Pi}$\, modifies only the horizontal part, \,$\widetilde{\Pi}_{2,0}=-\widetilde{\kappa}\,\widetilde{Q}_{H}$.\, It follows \ from here and the second relation in (\ref{TR2}) that the domain of definition of the transformed Poisson structure $\widetilde{\Pi}$ is the following open subset in $M$:
     \begin{equation*}
          \mathrm{Dom}\big(\widetilde{\Pi}\big) \,:=\, \{\, p \in M \,|\, 1-\kappa(p)(\varkappa_{\mu}(p)-c(p)) \,\neq\, 0 \,\}.
     \end{equation*}
Moreover, \,$\mathcal{Z}(\widetilde{\kappa})=\mathcal{Z}(\kappa)$.\, Taking into account the transition rule for the horizontal lift, \,$\mathrm{hor}^{\widetilde{\gamma}}{u}  = \mathrm{hor}^{\gamma}{u} \,+\, P^{\sharp}\dd\mu(u)$,\, we conclude that the characteristic distributions of $\Pi$ and $\widetilde{\Pi}$ coincide, \,$C^{\Pi}=C^{\widetilde{\Pi}}$.\, Let $(\mathcal{S},\varpi)$ and $(\mathcal{S},\widetilde{\varpi})$ be the symplectic foliations on $\mathrm{Dom}(\widetilde{\Pi})$ carrying the leaf-wise symplectic forms $\varpi$ and $\widetilde{\varpi}$ of $\Pi$ and $\widetilde{\Pi}$, respectively. Then, one can show that the difference between $\widetilde{\varpi}$ and $\varpi$ is the pull-back to $\mathcal{S}$ of the global $2$-form \,$\Upsilon:=-\dd\mu + c\,\OmegaH$\, on $M$. In other words, for every leaf \,$\iota_{S}:S \hookrightarrow E$\, of the characteristic foliation $\mathcal{S}$, we have \,$\widetilde{\varpi}_{S}-\varpi_{S}=\iota_{S}^{\ast}\Upsilon$.\, Therefore, $\widetilde{\Pi}$ is a result of the gauge transformation of the Poisson structure $\Pi$ via the $2$-form $\Upsilon$ which is closed along the leaves of $\mathcal{S}$. The closedness of $\Upsilon$ is equivalent to the condition \,$c \in \pi^{\ast}\Cinf{B}$.\, Note that the complement \,$M \setminus \mathrm{Dom}(\widetilde{\Pi})$\, just consists of all points \,$p \in M$\, at which the $2$-form \,$\iota_{S}^{\ast}\Upsilon+\varpi_{S}$\, becomes degenerate, where $S$ is the leaf through $p$.

Now, consider another simple symmetry of equations (\ref{IC1})-(\ref{IC3}) defined by \,$\rho_{\varepsilon}:(\gamma,\kappa,\beta)\mapsto(\gamma,\varepsilon\kappa,\varepsilon\beta)$\, for any fixed \,$\varepsilon\in\mathbb{R}$.\, Clearly, \,$\Pi \mapsto \widetilde{\Pi}=\varepsilon\Pi$.

Starting with an almost coupling Poisson tensor on the oriented fibered $5$-manifold $(M,\Omega)$ over the oriented $2$-manifold $(B,\omega)$ and using the above symmetries, we get the following recipe to construct a ``new'' Poisson structure from the original one.

\begin{proposition}
For a given arbitrary pair $(\mu,c)$ and an almost coupling Poisson tensor $\Pi$ on $M$ associated with a Poisson triple $(\gamma,\kappa,\beta)$, the symmetry transformation
     \begin{equation}
          \rho_{\frac{1}{\varepsilon}} \circ \mathcal{T}_{\mu,c} \circ \rho_{\varepsilon} \label{GT1}
     \end{equation}
induces the following smooth $\varepsilon$-dependent family \,$\{\Pi_{\varepsilon}\}_{\varepsilon\in\mathbb{R}}$\, of Poisson structures:
     \begin{equation}
          \Pi_{\varepsilon} \,=\, \frac{\kappa}{1 \,-\, \varepsilon\kappa\,(\varkappa_{\mu,\varepsilon}-c)}\,\mathrm{hor}^{\gamma_{\varepsilon}}\psi \,-\, \mathbf{i}_{\beta}\QV, \label{GT2}
     \end{equation}
where \,$\gamma_{\varepsilon}(X):=\gamma(X)+\varepsilon\,\mathbf{i}_{\dd_{0,1}\mu \wedge \beta}(\QV \wedge X)$,\, for any \,$X \in \X{M}$,\, and
     \begin{equation*}
         \varkappa_{\mu,\varepsilon} \,:=\, \frac{\dd_{1,0}^{\gamma}\mu \,+\, \frac{\varepsilon}{2}\,\{\mu\wedge\mu\}_{\beta}}{\pi^{\ast}\omega}.
     \end{equation*}
\end{proposition}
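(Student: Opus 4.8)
The plan is to prove the statement by direct composition. Since $\rho_{\varepsilon}$ is a symmetry of equations (\ref{IC1})--(\ref{IC3}) (it sends $(\gamma,\kappa,\beta)$ to $(\gamma,\varepsilon\kappa,\varepsilon\beta)$) and $\mathcal{T}_{\mu,c}$ is a symmetry by Proposition \ref{PropTransfGauge}, the composite (\ref{GT1}) automatically carries a Poisson triple to a Poisson triple. Hence the only real task is to evaluate the resulting triple explicitly and then read off the associated bivector field through formula (\ref{EP1}). No integrability needs to be re-checked by hand.

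First I would apply $\rho_{\varepsilon}$, obtaining $(\gamma,\varepsilon\kappa,\varepsilon\beta)$, and then record how the $\beta$-dependent auxiliary objects feeding into $\mathcal{T}_{\mu,c}$ rescale under $\beta\mapsto\varepsilon\beta$. Since the bracket $\{f_{1},f_{2}\}_{\beta}=-\QV(\beta,\dd{f_{1}},\dd{f_{2}})$ is linear in $\beta$, one has $\{\mu\wedge\mu\}_{\varepsilon\beta}=\varepsilon\{\mu\wedge\mu\}_{\beta}$; using $\pi^{\ast}\omega=\OmegaH$, the scalar $\varkappa_{\mu}$ evaluated on $(\gamma,\varepsilon\beta)$ then becomes precisely the quantity $\varkappa_{\mu,\varepsilon}$ of the statement. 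Likewise, the connection correction in the first relation of (\ref{TR2}) is linear in $\beta$, so it acquires a factor $\varepsilon$ and reproduces exactly $\gamma_{\varepsilon}$.

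Next I would substitute these rescalings into (\ref{TR2}) to apply $\mathcal{T}_{\mu,c}$ to $(\gamma,\varepsilon\kappa,\varepsilon\beta)$, yielding the intermediate triple $\big(\gamma_{\varepsilon},\ \tfrac{\varepsilon\kappa}{1-\varepsilon\kappa(\varkappa_{\mu,\varepsilon}-c)},\ \varepsilon\beta\big)$. Applying $\rho_{1/\varepsilon}$ multiplies the second and third entries by $1/\varepsilon$, which cancels the explicit factors $\varepsilon$ and returns $\big(\gamma_{\varepsilon},\ \tfrac{\kappa}{1-\varepsilon\kappa(\varkappa_{\mu,\varepsilon}-c)},\ \beta\big)$. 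Feeding this triple into (\ref{EP1}), and recalling that the vertical part $P_{\beta}=-\mathbf{i}_{\beta}\QV$ is $\gamma$-independent (so it is unaffected by the change of connection to $\gamma_{\varepsilon}$), gives exactly formula (\ref{GT2}) for $\Pi_{\varepsilon}$.

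I expect no genuine obstacle here; the only point requiring care is the consistent tracking of the powers of $\varepsilon$ carried by the $\beta$-dependent data (the bracket $\{,\}_{\beta}$, the scalar $\varkappa_{\mu}$, and the connection shift), together with the verification that $\rho_{1/\varepsilon}$ exactly undoes the scaling of $\rho_{\varepsilon}$ except where the nonlinear dependence of $\mathcal{T}_{\mu,c}$ on $\beta$ (through $\varkappa_{\mu,\varepsilon}$ and $\gamma_{\varepsilon}$) leaves the residual factor $\varepsilon$ visible in (\ref{GT2}). Smoothness of the family $\{\Pi_{\varepsilon}\}$ in $\varepsilon$ is then immediate, as $\Pi_{\varepsilon}$ depends rationally on $\varepsilon$ with nonvanishing denominator on the open set where $1-\varepsilon\kappa(\varkappa_{\mu,\varepsilon}-c)\neq0$.
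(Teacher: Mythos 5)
Your proposal is correct and follows essentially the same route as the paper, whose proof is exactly this one-line composition: apply $\rho_{\varepsilon}$, then $\mathcal{T}_{\mu,c}$, then $\rho_{\frac{1}{\varepsilon}}$ to obtain the triple $\bigl(\gamma_{\varepsilon},\,\tfrac{\kappa}{1-\varepsilon\kappa(\varkappa_{\mu,\varepsilon}-c)},\,\beta\bigr)$ and read off $\Pi_{\varepsilon}$ from (\ref{EP1}), with Poisson-ness guaranteed by Proposition \ref{PropTransfGauge}. Your write-up merely makes explicit the bookkeeping the paper leaves implicit (linearity in $\beta$ of $\{\,,\}_{\beta}$, of $\varkappa_{\mu}$, and of the connection shift, plus the $\gamma$-independence of $P_{\beta}$), which is exactly the right verification.
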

\begin{proof}
Applying transformation (\ref{GT1}) to the triple $(\gamma,\kappa,\beta)$, we get the Poisson triple \\ $\left( \widetilde{\gamma} = \gamma_{\varepsilon}, \,\widetilde{\kappa} = \frac{\kappa}{1-\varepsilon\kappa\,(\varkappa_{\mu,\varepsilon}-c)}, \, \widetilde{\beta} = \beta \right)$ inducing the Poisson tensor $\Pi_{\varepsilon}$ in (\ref{GT2}). \mbox{\quad}
\end{proof}

\begin{corollary}
If $U$ is an open subset in $M$ with compact closure, then there exists a \,$\delta > 0$\, such that for the Poisson tensor $\Pi_{\varepsilon}$ in (\ref{GT2}) we have \,$U  \subseteq \mathrm{Dom}(\Pi_{\varepsilon})$,\, for all \,$\varepsilon \in(-\delta,\delta)$.
\end{corollary}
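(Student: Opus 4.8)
The plan is to make the domain of $\Pi_{\varepsilon}$ completely explicit as the non-vanishing locus of the scalar denominator appearing in (\ref{GT2}), and then to run a continuity-plus-compactness argument anchored at $\varepsilon=0$. First I would record that, by construction of the family (\ref{GT2}), the bivector field $\Pi_{\varepsilon}$ is defined exactly where the scaling factor $\widetilde{\kappa}$ does not blow up, i.e.
\[
     \mathrm{Dom}\big(\Pi_{\varepsilon}\big) \,=\, \big\{\, p \in M \,\big|\, D(p,\varepsilon) \neq 0 \,\big\}, \qquad D(p,\varepsilon) \,:=\, 1 \,-\, \varepsilon\,\kappa(p)\big(\varkappa_{\mu,\varepsilon}(p)-c(p)\big).
\]
Since $\pi^{\ast}\omega=\OmegaH$ is a nowhere vanishing $2$-form of bidegree $(2,0)$ and both $\dd_{1,0}^{\gamma}\mu$ and $\tfrac{1}{2}\{\mu\wedge\mu\}_{\beta}$ are $2$-forms of bidegree $(2,0)$, the quotients defining $\varkappa_{\mu,\varepsilon}$ are globally defined smooth functions on $M$ depending affinely on $\varepsilon$.

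Next I would unpack the $\varepsilon$-dependence of $D$. Writing $A:=\dd_{1,0}^{\gamma}\mu/\pi^{\ast}\omega$ and $B:=\tfrac{1}{2}\{\mu\wedge\mu\}_{\beta}/\pi^{\ast}\omega$, so that $\varkappa_{\mu,\varepsilon}=A+\varepsilon B$, one gets
\[
     D(p,\varepsilon) \,=\, 1 \,-\, \varepsilon\,\kappa(p)\big(A(p)-c(p)\big) \,-\, \varepsilon^{2}\,\kappa(p)\,B(p),
\]
which is a smooth function on $M\times\mathbb{R}$, polynomial of degree at most $2$ in $\varepsilon$, and satisfies $D(\cdot,0)\equiv 1$. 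The whole point is that at $\varepsilon=0$ the denominator equals $1$ uniformly, so $\mathrm{Dom}(\Pi_{0})=M$, and the claim is a stability statement for this property under small perturbations of $\varepsilon$ over a relatively compact region.

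Finally I would invoke the compactness of $\overline{U}$. The continuous functions $\kappa(A-c)$ and $\kappa B$ are bounded on the compact set $\overline{U}$; set $C_{1}:=\sup_{\overline{U}}|\kappa(A-c)|$ and $C_{2}:=\sup_{\overline{U}}|\kappa B|$, both finite. For $p\in\overline{U}$ and $|\varepsilon|\leq 1$ (so that $\varepsilon^{2}\leq|\varepsilon|$) this yields the estimate
\[
     \big|D(p,\varepsilon)-1\big| \,\leq\, |\varepsilon|\,C_{1} \,+\, \varepsilon^{2}\,C_{2} \,\leq\, |\varepsilon|\big(C_{1}+C_{2}\big).
\]
Taking $\delta:=\min\big\{1,\,(1+C_{1}+C_{2})^{-1}\big\}>0$, we obtain $|D(p,\varepsilon)-1|<1$, and hence $D(p,\varepsilon)>0$, for all $p\in\overline{U}$ and all $\varepsilon\in(-\delta,\delta)$. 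Therefore $\overline{U}\subseteq\mathrm{Dom}(\Pi_{\varepsilon})$, and in particular $U\subseteq\mathrm{Dom}(\Pi_{\varepsilon})$, for every such $\varepsilon$.

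I do not expect a genuine obstacle here: the statement is a routine continuity argument, and the only subtlety worth flagging is that $\varkappa_{\mu,\varepsilon}$ itself carries an explicit factor of $\varepsilon$, so $D$ is quadratic rather than linear in $\varepsilon$; this is dealt with cleanly by restricting to $|\varepsilon|\leq 1$ and absorbing the quadratic term into the linear bound. An equivalent coordinate-free phrasing would observe that $D^{-1}(0)$ is closed in $M\times\mathbb{R}$ and disjoint from the compact slice $\overline{U}\times\{0\}$, so some tube $\overline{U}\times(-\delta,\delta)$ avoids it; I prefer the explicit estimate above because it produces a usable value of $\delta$ and shows moreover that $D>0$ throughout, which guarantees that the sign of the scaling factor is preserved on $U$ for small $\varepsilon$.
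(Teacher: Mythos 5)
Your proof is correct and is essentially the argument the paper intends: the corollary is stated there without proof, as an immediate continuity--compactness consequence of the explicit description of $\mathrm{Dom}(\Pi_{\varepsilon})$ as the non-vanishing locus of the denominator $1-\varepsilon\kappa\,(\varkappa_{\mu,\varepsilon}-c)$, which equals $1$ identically at $\varepsilon=0$. Your only addition beyond the expected sketch is the explicit handling of the quadratic $\varepsilon$-dependence coming from $\varkappa_{\mu,\varepsilon}=A+\varepsilon B$ and the resulting concrete value of $\delta$, which is a correct and worthwhile refinement.
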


In particular, if $M$ is \emph{compact}, then for small enough $\varepsilon$, the Poisson tensor $\Pi_{\varepsilon}$ is well-defined in the whole $M$ and can be regarded as a deformation of the original one, \,$\Pi=\Pi_{\varepsilon}|_{\varepsilon=0}$.\, Moreover, by using the Moser homotopy method, one can show that if $c=0$, then the Poisson structures $\Pi$ and $\Pi_{\varepsilon}$ are isomorphic \cite{MYu,AEYu-17}.

As we noted in Section 3, the fiber preserving transformations respect the almost coupling property on fiber bundles. In other words, we observe that there exists a natural action of fiber preserving diffeomorphisms \,$g:M \rightarrow M$\, on the set of Poisson triples which is given by \,$g^{\ast}(\gamma,\kappa,\beta):=(g^{\ast}\gamma,g^{\ast}\kappa,g^{\ast}\beta)$.\, Therefore, the pull-back $g^{\ast}\Pi$ of any almost coupling Poisson tensor $\Pi$ associated with a Poisson triple $(\gamma,\kappa,\beta)$ is again an almost coupling Poisson tensor on $M$ whose Poisson triple is just $g^{\ast}(\gamma,\kappa,\beta)$. One can verify this fact directly by applying the pull back $g^{\ast}$ to both sides of equations (\ref{IC1})-(\ref{IC3}).

     \section{The Case of Trivial Fiber Bundles}

To construct a family of Poisson structures by using formula (\ref{CT2}), we need to fix an original almost coupling Poisson structure $\Pi$ or, a ``particular'' solution to equations (\ref{IC1})-(\ref{IC3}). According to results of Section $4$, one can fix $\Pi$ in the following situation.

\begin{proposition}
Let $(M\overset{\pi}{\rightarrow}B,P_{\beta})$ be a flat Poisson fiber bundle over a symplectic $2$-dimensional base $(B,\omega)$, which is equipped with a flat Poisson connection $\gamma$. Then, for any Casimir function \,$\kappa_{0} \in \mathrm{Casim}(M;P_{\beta})$,\, the triple $(\gamma,\kappa_{0},\beta)$ is Poisson and induces the Poisson tensor \,$\Pi=\kappa_{0}\,\mathrm{hor}^{\gamma}\psi + P_{\beta}$.
\end{proposition}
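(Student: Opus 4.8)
The plan is to check that $(\gamma,\kappa_0,\beta)$ satisfies the three integrability conditions (\ref{IC1})--(\ref{IC3}) and then apply Theorem \ref{TeoJacobiOrientable}; once these hold, the associated bivector field is Poisson, and its explicit form $\Pi=\kappa_0\,\hor^\gamma\psi+P_\beta$ is read off from (\ref{EP1}) together with the identity $\QH=-\hor^\gamma\psi$ in (\ref{PP1}). Thus the whole argument reduces to translating the three structural hypotheses---$P_\beta$ a vertical Poisson tensor, $\gamma$ a flat Poisson connection, and $\kappa_0$ a Casimir of $P_\beta$---into these three equations.

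I would first dispose of (\ref{IC1}) and (\ref{IC2}). Condition (\ref{IC1}), $\dd_{0,1}\beta\wedge\beta=0$, is exactly the fiberwise Jacobi identity for $P_\beta$, which holds because $(M\overset{\pi}{\rightarrow}B,P_\beta)$ is a Poisson fiber bundle. For (\ref{IC2}), I would use that $\gamma$ is a Poisson connection, meaning $\dlie{\hor^\gamma u}P_\beta=0$ for every $u\in\X{B}$ (condition (\ref{PA})). By the equivalence of (\ref{PA}) and (\ref{C2})---obtained by contracting with the nowhere vanishing tensor $\QH\wedge\QV$ and therefore valid on all of $M$---this yields $\dd_{1,0}\beta+\beta\wedge\theta=0$. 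Since $\theta\wedge\beta=-\beta\wedge\theta$ for $1$-forms, this is the same as $\dd_{1,0}\beta-\theta\wedge\beta=0$, and multiplying by $\kappa_0$ gives (\ref{IC2}).

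It remains to check (\ref{IC3}), $\dd_{0,1}\kappa_0\wedge\beta-\kappa_0^2\varrho=0$, which I would obtain by showing both terms vanish separately. The first term vanishes because $\kappa_0\in\mathrm{Casim}(M;P_\beta)$ satisfies $\dd_{0,1}\kappa_0\wedge\beta=0$ by the second equation in (\ref{CasimACoup}). The second term vanishes once $\varrho=0$, and this is the only step that requires a genuine computation: from the curvature identity $\mathbf{i}_{\Curv^\gamma(u_1,u_2)}\OmegaV=-\pi^\ast\!\big(\omega(u_1,u_2)\big)\,\varrho$ and the flatness $\Curv^\gamma=0$ one gets $\pi^\ast\!\big(\omega(u_1,u_2)\big)\,\varrho=0$ for all $u_1,u_2\in\X{B}$; choosing $u_1,u_2$ with $\omega(u_1,u_2)\neq0$ at a given point, which is possible since $\omega$ is nondegenerate, forces $\varrho=0$ pointwise and hence identically. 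Equivalently, flatness makes the curvature operator $\dd_{2,-1}$ vanish, so that $\varrho=\mathbf{i}_{\QH}\dd_{2,-1}\OmegaV=0$ by (\ref{F2}). Therefore (\ref{IC3}) reduces to $0-\kappa_0^2\cdot0=0$.

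Having verified (\ref{IC1})--(\ref{IC3}), Theorem \ref{TeoJacobiOrientable} guarantees that $(\gamma,\kappa_0,\beta)$ is a Poisson triple and that the corresponding $\Pi$ in (\ref{EP1}) is a Poisson tensor. The main---and essentially only---obstacle is the implication $\Curv^\gamma=0\Rightarrow\varrho=0$; the rest of the argument is a routine identification of each hypothesis with the corresponding integrability condition.
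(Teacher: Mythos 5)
Your proposal is correct and follows essentially the same route as the paper's own proof: both verify the integrability conditions (\ref{IC1})--(\ref{IC3}) directly, identifying the Poisson-connection hypothesis with \,$\dd_{1,0}\beta+\beta\wedge\theta=0$\, (hence (\ref{IC2})), flatness with \,$\varrho=0$,\, and the Casimir hypothesis with \,$\dd_{0,1}\kappa_{0}\wedge\beta=0$\, (hence (\ref{IC3})). You merely supply details the paper leaves implicit, namely the explicit check of (\ref{IC1}) and the justification of \,$\Curv^{\gamma}=0\Rightarrow\varrho=0$\, via the curvature identity and the nondegeneracy of $\omega$ (equivalently, the vanishing of $\dd_{2,-1}$), both of which are sound.
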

\begin{proof}
The condition for the connection to be Poisson is equivalent to the relation \,$\dd_{1,0}\beta + \beta \wedge \theta = 0$\,  on M which implies (\ref{IC2}). The flatness of $\gamma$ means that \,$\varrho=0$\, and the equality \,$\dd_{0,1}\kappa_{0} \wedge \beta = 0$\, holds for any Casimir function $\kappa_{0}$ of $P_{\beta}$. This proves (\ref{IC3})
\end{proof}

In this section, we consider the case when the flat Poisson bundle is trivial and comes from the product of two Poisson manifolds. Let $(B,\omega)$ be a symplectic $2$-manifold and $(N,\Omega^{\mathrm{fib}})$ an orientable $3$-manifold equipped with a Poisson tensor
$P_{\mathrm{fib}}$ and a volume form $\Omega^{\mathrm{fib}}$. Then, one can choose adapted local coordinate systems \,$x=(x^{1},x^{2})$\, on $B$ and \,$y=(y^{1},y^{2},y^{3})$\, on $N$ such that \,$\omega = \dd{x^{1}} \wedge \dd{x^{2}}$\, and \,$\Omega^{\mathrm{fib}} \,=\, \dd{y^{1}} \wedge \dd{y^{2}} \wedge \dd{y^{3}}$.

Consider the product manifold \,$M=B \times N$\, and denote by \,$\mathrm{pr}_{1}:M \rightarrow B$\, and \,$\mathrm{pr}_{2}:M \rightarrow N$\, the canonical projections. Then, we have a trivial bundle \,$\pi=\mathrm{pr}_{1}:M \rightarrow B$\, over $B$ with typical fiber $N$. The pair $(\omega,\Omega^{\mathrm{fib}})$ induces a volume form on the total space $E$ given by
     \begin{equation}
          \Omega \,=\, \pi^{\ast}\omega\wedge \mathrm{pr}_{2}^{\ast}\Omega^{\mathrm{fib}}. \label{PR2}
     \end{equation}
Let \,$\gamma=\eta^{a} \otimes \frac{\partial}{\partial y^{a}}$\, be an Ehresmann connection on $M$, where \,$\eta^{a}=\dd{y^{a}}+\gamma_{i}^{a}(x,y)\,\dd{x^{i}}$,\, $a=1,2,3$.\, Then, the corresponding horizontal subbundle $\HH$ is
generated by vector fields \,$\mathrm{hor}_{i}^{\gamma}=\frac{\partial}{\partial x^{i}}-\gamma_{i}^{a}\,\frac{\partial}{\partial y^{a}}$,\, $i=1,2$.\, It follows from (\ref{PP1}) and (\ref{PR2}) that there exists a unique horizontal $3$-form \,$\OmegaV \in \Gamma\wedge^{3}\HH^{0}$\, such that \,$\Omega=\pi^{\ast}\omega \wedge \OmegaV$.\, Locally, \,$\OmegaV=\eta^{1} \wedge
\eta^{2} \wedge \eta^{3}$.

Let $(\gamma,\beta,\kappa)$ be a Poisson triple on the trivial bundle \,$M=B\times N$.\, In adapted coordinates $(x,y)$, we have \,$\beta=\beta_{a}(x,y)\,\eta^{a}$\, and
     \begin{equation}
          \theta \,=\, -\frac{\partial\gamma_{i}^{a}}{\partial{y^{a}}}\,\dd{x^{i}}, \qquad \varrho \,=\, -\tfrac{1}{2}\,\epsilon_{abc}\varrho^{a}(x,y)\,\eta^{b} \wedge \eta^{c}, \label{M2}
     \end{equation}
where \,$\varrho^{a} \,=\, \frac{\partial\gamma_{1}^{a}}{\partial x^{2}} \,-\, \frac{\partial\gamma_{2}^{a}}{\partial{x^{1}}} \,+\, \gamma_{1}^{b}\,\frac{\partial \gamma_{2}^{a}}{\partial y^{b}} \,-\, \gamma_{2}^{b}\,\frac{\partial\gamma_{1}^{a}}{\partial y^{b}}$\, and $\epsilon_{abc}$ denotes the totally anti-symmetric Levi-Civita symbol.

In terms of the components of $\gamma$ and $\beta$, equations (\ref{IC1})-(\ref{IC3}) take the form
     \begin{align*}
          \underset{(a,b,c)}{\mathfrak{S}}\left(\frac{\partial\beta_{a}}{\partial y^{b}}-\frac{\partial\beta_{b}}{\partial y^{a}} \right)\beta_{c} \,&=\, 0, \\[0.15cm]
          \kappa\,\left( \frac{\partial\beta_{a}}{\partial x^{i}} \,-\, \gamma_{i}^{b}\,\frac{\partial\beta_{a}}{\partial y^{b}} \,-\, \beta_{b}\,\frac{\partial\gamma_{i}^{b}}{\partial y^{a}} \,+\, \beta_{a}\,\frac{\partial\gamma_{i}^{b}}{\partial y^{b}} \,\right) \,&=\, 0, \\[0.15cm]
          \frac{\partial\kappa}{\partial y^{a}}\,\beta_{b} \,-\, \frac{\partial\kappa}{\partial y^{b}}\beta_{a} \,+\, \epsilon_{abc}\kappa^{2}\,\varrho^{c} \,&=\, 0.
     \end{align*}

Now, let us assume that the $3$-manifold $N$ is also equipped with a Poisson structure $P_{\mathrm{fib}}$ which is induced by a Poisson $1$-form \,$\beta^{\mathrm{fib}}=\beta_{a}(y)\,\dd{y^{a}}$,\, that is, $\mathbf{i}_{P_{\mathrm{fib}}}\Omega^{\mathrm{fib}}=\beta^{\mathrm{fib}}$.\, Equivalently, \,$P_{\mathrm{fib}}=\mathbf{i}_{\beta^{\mathrm{fib}}}\frac{\partial}{\partial y^{1}} \wedge \frac{\partial}{\partial y^{2}} \wedge \frac{\partial}{\partial y^{3}}$.\, Therefore, $(N,P_{\mathrm{fib}})$ is a typical fiber of the trivial Poisson bundle \,$\pi:M \rightarrow B$.\, Suppose that we are given a setup $(\kappa_{0},c,\mu)$ consisting of some fiberwise Casimir functions \,$\kappa_{0},c \in \Cinf{M}$,\, i.e., \,$\kappa_{0},c\,|_{\{x\}\times N} \in \mathrm{Casim}(N,P_{\mathrm{fib}})$,\, for all \,$x \in B$,\, and a global horizontal $1$-form \,$\mu=\mu_{i}(x,y)\,\dd{x^{i}}$\, on $M$.

We associate to the data $(\kappa_{0},c,\mu,\beta^{\mathrm{fib}})$ the $\varepsilon$-dependent triple $(\gamma_{\varepsilon},\kappa_{\varepsilon},\beta=\mathrm{pr}_{2}^{\ast}\beta^{\mathrm{fib}})$ given by
     \begin{equation}
          (\gamma_{\varepsilon})_{i}^{a} \,=\, \varepsilon\,\epsilon^{abc}\frac{\partial\mu_{i}}{\partial y^{b}}\,\beta_{c}, \qquad \kappa_{\varepsilon} \,=\, \frac{\kappa_{0}}{1-\varepsilon\kappa_{0}\,(\varkappa_{\mu}-c)}, \qquad \beta \,=\, \beta_{a}(y)\,\eta^{a}. \quad \label{T1}
     \end{equation}

Let \,$\gamma=\dd{y^{a}} \otimes \frac{\partial}{\partial y^{a}}$\, be the flat Ehresmann connection on \,$M=B \times N$\, associated to the canonical horizontal distribution \,$\HH(x,y)=\T_{x}B \oplus\{0\}$.\, Clearly, $(\gamma,\kappa_{0},\beta)$ is a Poisson triple. Then, the triple $(\gamma_{\varepsilon},\kappa_{\varepsilon},\beta)$ in (\ref{T1}) is obtained by applying the gauge transformation (\ref{GT1}) to $(\gamma,\kappa_{0},\beta)$ and hence, by Proposition \ref{PropTransfGauge}, is again a solution to equations (\ref{IC1})-(\ref{IC3}).

So, we conclude that the original setup $(\kappa_{0},c,\mu,\beta^{\mathrm{fib}})$ induces the following $\varepsilon$-dependent Poisson
bivector field
     \begin{equation}
          \Pi_{\varepsilon} \,=\, \frac{\kappa_{0}}{1-\varepsilon\kappa_{0}\,(\varkappa_{\mu}-c)}\,\mathrm{hor}_{1}^{\gamma_{\varepsilon}} \wedge \mathrm{hor}_{2}^{\gamma_{\varepsilon}} \,+\, \epsilon^{abc}\beta_{a}(y)\,\frac{\partial}{\partial y^{b}}\wedge\frac{\partial}{\partial y^{c}}. \label{N2}
     \end{equation}
Here \,$\mathrm{hor}_{i}^{\gamma_{\varepsilon}}=\frac{\partial}{\partial x^{i}}-\varepsilon\,\epsilon^{abc}\,\frac{\partial\mu_{i}}{\partial y^{a}}\,\beta_{b}\,\frac{\partial}{\partial y^{c}}$ \ and \ $\varkappa_{\mu,\varepsilon} \,=\, \frac{\partial\mu_{2}}{\partial x^{1}} \,-\, \frac{\partial\mu_{1}}{\partial x^{2}} \,-\, \varepsilon\,\epsilon^{abc}\,\frac{\partial\mu_{1}}{\partial y^{a}}\,\beta_{b}\,\frac{\partial\mu_{2}}{\partial y^{c}}$.

Suppose that there exists a singular point \,$y_{0} \in N$\, of \,$P_{\mathrm{fib}}$, \,$\beta_{b}(y_{0})=0$\, $(b=1,2,3)$\, such that
\,$\mu_{1}(x,y_{0})=\mu_{2}(x,y_{0})=0$\, and \,$c(x,y_{0})=0$\, for all \,$x \in B$.\, Then, for a fixed $\varepsilon$, the bivector field $\Pi_{\varepsilon}$ (\ref{N2}) is well defined in a neighborhood $U_{\varepsilon}$ of the section $B \times \{y_{0}\}$ which  represents a $2$-dimensional Poisson submanifold of $(U_{\varepsilon},\Pi_{\varepsilon})$ equipped with the Poisson structure \,$\Psi=\kappa_{0}(x,y_{0})\,\frac{\partial}{\partial x^{1}}\wedge\frac{\partial}{\partial x^{2}}$\, (see Proposition \ref{PropSubPoisson}).

Now, let us apply the unimodularity criteria in Theorem \ref{TeoUnimodGlob} to the family (\ref{N2}). Assume that the $1$-form $\beta^{\mathrm{fib}}$ is closed, \,$\frac{\partial\beta_{a}}{\partial y^{b}}=\frac{\partial\beta_{b}}{\partial y^{a}}$.\, This implies that $P_{\mathrm{fib}}$ is unimodular. Then, we claim that the Poisson bivector field $\Pi_{\varepsilon}$ (\ref{N2}) is unimodular in the domains \,$M^{\Pi}=M\setminus\mathcal{Z}(\kappa)$\, and $\mathrm{Int}\,\mathcal{Z}(\kappa)$. Indeed, by using formulas (\ref{M2}) we compute
               \begin{equation*}
                    \theta_{i} \,=\, -\frac{\partial(\gamma_\varepsilon)_{i}^{a}}{\partial y^{a}} \,=\, \varepsilon\,\frac{\partial^{2}\mu_{i}}{\partial y^{a}\partial y^{b}}P_{\mathrm{fib}}^{ba} \,+\, \varepsilon\,\frac{\partial\mu_{i}}{\partial y^{b}}\frac{\partial P_{\mathrm{fib}}^{ba}}{\partial y^{a}}.
               \end{equation*}
The first term in the right hand side of this equality is zero. The second one vanishes because of the closedness condition, \,$\frac{\partial P_{\mathrm{fib}}^{bc}}{\partial y^{c}}=\epsilon^{abc}\,\frac{\partial\beta_{a}}{\partial y^{c}}=0$.\, Consequently, \,$\theta\equiv0$.\, So, (\ref{ThetaExact}) holds for \,$h \equiv 0$.\, Moreover, $\Pi_{\varepsilon}$ satisfies the item (ii) of Theorem \ref{TeoUnimodA-Coup} and the corresponding invariant volume form is given by (\ref{PR2}).

Here is a realization of the global criteria in Theorem \ref{TeoUnimodGlob}.

\begin{example}\label{EjmBR3Unimod}
Let \,$M=B \times \mathbb{R}^{3}$,\, where $B$ is compact. Consider \,$N=\mathbb{R}^{3}$\, equipped with cyclic brackets associated with the Lie-Poisson bracket on $\mathfrak{so}^{\ast}(3)$. Put \,$\beta^{\mathrm{fib}}=y \cdot dy$\, and \,$\kappa_{0}(y)=\chi(\| y \|^{2})$,\, where \,$\chi \in \Cinf{\mathbb{R}}$\, is a bump function with \,$\mathrm{supp}\chi=[0,1]$.\, Then, for arbitrary $c,\mu$ and sufficiently small $\varepsilon$, formula (\ref{N2}) gives the almost coupling Poisson tensor $\Pi_{\varepsilon}$ defined on the whole $M$ with the coupling domain \,$M^{\Pi_{\varepsilon}}=B \times \{\| y \|<1\}$.\, In this case, the all hypothesis of Theorem \ref{TeoUnimodGlob} hold and condition (\ref{Kappa0Casim}) is satisfied for \,$h=0$\, and \,$K=\big(1-\varepsilon\,\kappa_{0}(\varkappa_{\mu}-c)\big)^{-1}$.\, Therefore, $\Pi_{\varepsilon}$ is unimodular on $M$ and the
corresponding global invariant volume form is given by
     \begin{equation*}
          {\Omega}^{\mathrm{inv}}_{\varepsilon} \,=\, \big(1-\varepsilon\,\kappa_{0}(\varkappa_{\mu}-c)\big)\,\pi^{\ast}\omega \wedge \mathrm{pr}^{\ast}_{2}\Omega^{\mathrm{fib}}.
     \end{equation*}
\hfill{$\triangleleft$}
\end{example}




     \section*{Acknowledgments}

This work was partially supported by the Mexican National Council of Science and Technology (CONACyT), under research project CB-$2013$-$219631$.





\pdfbookmark[1]{References}{ref}
\LastPageEnding


\begin{thebibliography}{99}
\footnotesize \itemsep=0pt

\bibitem {MYu} M. Avenda\~{n}o-Camacho, Yu. Vorobiev, 
Deformations of Poisson structures on fibered manifolds and adiabatic slow-fast systems.
\emph{Int. J. Geom. Methods Mod. Phys.} $\mathbf{14}$ ($2017$), $06$.

\bibitem{BraFe-08} O. Brahic, R. L. Fernandes, 
Poisson fibrations and fibered symplectic groupoids,
in \emph{Poisson Geometry in Mathematics and Physics}, Contemp. Math. $\mathbf{450}$ (Amer. Math. Soc., Providence, RI, $2008$), pp. $41$-$59$.

\bibitem {BuRa-03} H. Bursztyn, O. Radko, 
Gauge equivalence of Dirac structures and symplectic groupoids.
\emph{Ann. Inst. Fourier (Grenoble)} $\mathbf{53}$ ($2003$), $309$-$337$.

\bibitem {CIMP-94} J. F. Carinena, A. Ibort, G. Marmo and A. M. Perelomov, 
On the geometry of Lie algebras and Poisson tensors.
\emph{J. Phys. A: Math. Gen.} $\mathbf{27}$ ($1994$), $7425$-$49$.

\bibitem{CrMa-10} M. Crainic, I. Marcut, 
A normal form theorem around symplectic leaves.
\emph{J. Differential Geom.} $\mathbf{92}$ ($2012$), $417$-$461$.

\bibitem {Damianou} P. A. Damianou, F. Petalidou. 
Poisson brackets with prescribed Casimirs.
\emph{Canad. J. Math.} $\mathbf{64}$ ($2012$), $991$-$1018$.

\bibitem {PabSua-2018} M. Evangelista-Alvarado, P. Su\'arez-Serrato, J. Torres-Orozco and R. Vera, 
On Bott-Morse Foliations and their Poisson Structures in Dimension $3$,
arXiv:1801.09735 [math.SG]

\bibitem {Nar-2015} L. C. Garcia-Naranjo, P. Su\'arez-Serrato and R. Vera, 
Poisson Structures on Smooth $4$-manifolds.
\emph{Lett Math Phys} $\mathbf{105}$ ($2015$) $1533$-$1550$.

\bibitem {GMP} J. Grabowski, G. Marmo and A. M. Perelomov, 
Poisson structures: towards a classification.
\emph{Mod. Phys. Lett. A} $\mathbf{18}$ ($1993$) $1719$-$33$.

\bibitem {GN-93} H. Gumral and Y. Nutku, 
Poisson structures of dynamical systems with three degrees of freedom.
\emph{J. Math. Phys.} $\mathbf{34}$ ($1993$), $5691$.

\bibitem {LiuXU-92} Z. -J. Liu and P. Xu, 
On quadratic Poisson structures.
\emph{Lett. Math. Phys.} $\mathbf{26}$ ($1992$), $33$-$42$.

\bibitem {Marcut} I. Marcut, 
Rigidity around Poisson submanifolds.
\emph{Acta. Math.} $\mathbf{213}$ ($2014$), $137$-$198$.

\bibitem{MMR-90} J. E. Marsden, R. Montgomery and T. Ratiu, 
Reduction, symmetry and phases in mechanics,
in \emph{Mem. Am. Math. Soc. ($436$)} $\mathbf{88}$ (Amer. Math. Soc., Providence, RI, $1990$), pp. $1$-$110$.

\bibitem{MaMoRa-90} R. Montgomery, J. E. Marsden and T. Ratiu, 
Gauged Lie-Poisson structures,
in \emph{Fluids and Plasmas: Geometry and Dynamics}, Cont. Math., eds. J. E. Marsden, $\mathbf{28}$ (Amer. Math. Soc., Boulder, CO, $1984$), pp. $101$-$114$.

\bibitem {AEYu-17} A. Pedroza, E. Velasco-Barreras and Yu. Vorobiev, 
Unimodularity criteria for Poisson structures on foliated manifolds.
\emph{Lett. Math. Phys.} $\mathbf{108}$ ($2018$), $861$-$882$.

\bibitem {Ra-2002} O. Radko. 
A classification of topologically stable Poisson structures on a compact oriented surface.
\emph{J. Symplectic Geom.} $\mathbf{1}$ ($2002$), $523$-$542$.

\bibitem {SeWe-01} P. Severa and A. Weinstein, 
Poisson geometry with a $3$-form background.
\emph{Progr. Theoret. Phys. Suppl.} $\mathbf{144}$ ($2001$), $145$-$154$.

\bibitem {Va-94} I. Vaisman, 
\emph{Lectures on the Geometry of Poisson Manifolds},
(Springer Basel AG, $1994$).

\bibitem {Va-04} I. Vaisman, 
Coupling Poisson and Jacobi structures on foliated manifolds.
\emph{Int. J. Geom. Methods Mod. Phys.} $\mathbf{1}$ ($2004$), $607$-$637$.

\bibitem {JVYu} J. A. Vallejo and Yu.Vorobiev, 
$G$-Invariant Deformations of almost coupling Poisson Structures.
\emph{SIGMA} $\mathbf{13}$ ($2017$), $022$, $13$pp.

\bibitem {YuV-2001} Yu. Vorobjev, 
Coupling tensors and Poisson geometry near a single symplectic leaf.
In Lie Algebroids and Related Topics in Differential
Geometry (Warsaw, $2000$), Banach Center Publ., $\mathbf{54}$, Polish Acad. Sci. Inst.
Math., Waszawa, ($2001$), $249$-$274$.

\bibitem {Wade} A. Wade, 
Poisson fiber bundles and coupling Dirac structures.
\emph{Ann. Glob. Anal. Geom.} $\mathbf{33}$ ($2008$), $207$-$217$.

\bibitem {We-97} A. Weinstein, 
The modular automorphism group of a Poisson manifold.
\emph{J. Geom. Phys.} $\mathbf{23}$ (1997), $379$-$394$.



\end{thebibliography}
\end{document}